\newtheorem{Thm}{Theorem}[section]
\newtheorem{Lem}[Thm]{Lemma}
\newtheorem{Cor}[Thm]{Corollary}
\newtheorem{Prop}[Thm]{Proposition}
\newtheorem{Conj}[Thm]{Conjecture}
\newtheorem{Qu}[Thm]{Question}
\theoremstyle{definition}
\newcommand{\K}{\mathbb{K}}
\newcommand{\Z}{\mathbb{Z}}
\newcommand{\N}{\mathbb{N}}
\newcommand{\C}{\mathbb{C}}
\newcommand{\df}{\colon}
\newcommand{\cA}{{\mathcal A}}
\newcommand{\cC}{{\mathcal C}}
\newcommand{\cF}{{\mathcal F}}
\newcommand{\cS}{{\mathcal S}}
\newcommand{\cT}{{\mathcal T}}
\newcommand{\cU}{{\mathcal U}}
\newcommand{\g}{\mathfrak{g}}
\newcommand{\n}{\mathfrak{n}}
\newcommand{\bb}{\mathbf{b}}
\newcommand{\be}{\mathbf{e}}
\newcommand{\bi}{{\mathbf i}}
\newcommand{\br}{{\mathbf r}}
\newcommand{\wtB}{\widetilde{B}} 
\newcommand{\LL}{\Lambda}
\newcommand{\rk}{\operatorname{rank}}
\newcommand{\Tor}{\operatorname{Tor}}
\newcommand{\Ima}{\operatorname{Im}}
\newcommand{\ov}{\overline}
\newcommand{\Quot}{\operatorname{Frac}}
\newcommand{\bsm}{\begin{smallmatrix}}
\newcommand{\esm}{\end{smallmatrix}}
\newcommand{\bbsm}{\left[\begin{smallmatrix}}
\newcommand{\besm}{\end{smallmatrix}\right]}
\newcommand{\bbm}{\begin{matrix}}
\newcommand{\ebm}{\end{matrix}}
\newcommand{\Iff}{\Longleftrightarrow}
\newcommand{\ra}{\rightarrow}
\begin{document}

\date{25.07.2018}
\parskip10pt
\setcounter{tocdepth}{1}

\title{Quantum cluster algebras and their specializations}

\author{Christof Gei{\ss}}
\address{Christof Gei{\ss}, Instituto de Matem\'aticas,
Universidad Nacional Aut\'onoma de M\'exico,
Ciudad Universitaria,
04510 Cd.~de M\'exico,
M\'EXICO}
\email{christof.geiss@im.unam.mx}

\author{Bernard Leclerc}
\address{Bernard Leclerc,
Université de Caen Normandie,
CNRS UMR 6139 LMNO, 14032 Caen, FRANCE}
\email{bernard.leclerc@unicaen.fr}

\author{Jan Schr\"oer}
\address{Jan Schr\"oer,
Mathematisches Institut,
Universit\"at Bonn,
Endenicher Allee 60,
53115 Bonn,
GERMANY}
\email{schroer@math.uni-bonn.de}

\subjclass[2010]{Primary 13F60; Secondary 16G20, 17B67.}


\begin{abstract}
We show that if a cluster algebra coincides with its upper cluster algebra and 
admits a grading with finite dimensional homogeneous components, the corresponding Berenstein-Zelevinsky 
quantum cluster algebra can be viewed as a flat deformation of the classical cluster algebra. 
\end{abstract}

\maketitle
\tableofcontents



\section{Introduction and main result}


Throughout, let $\K$ be a field of characteristic $0$.
Let $q$ be a variable, and set $R:=\K[q^{\pm 1/2}]$, which is a 
principal ideal domain. 
We write $\K_1 := R/pR$, where $p := q^{1/2} - 1$. 
Of course, as a ring $\K_1\cong \K$. 
However, we want to stress the way how $\K$ is considered as an $R$-module.

We fix some positive integers $m > n$.
Let $(\LL,\wtB)$ be a compatible pair in the sense of \cite[Section~3]{BZ}.
We can see $\LL = (\lambda_{ij}) \in M_m(\Z)$ as a skew-symmetric 
$(m \times m)$-matrix over the integers, and
$\wtB \in M_{m,m-n}(\Z)$ is an $(m \times (m-n))$-matrix over the integers such
that the first $m-n$ rows of $\wtB$ form a skew-symmetrizable matrix, 
which is denoted by $B$.
The matrices $\LL$ and $\wtB$ need to satisfy a compatibility condition.

Let 
$\cT_q(\LL)$
be the $R$-algebra with generators
$X_1,\ldots,X_m,X_1^{-1},\ldots,X_m^{-1}$ subject to the relations
$$
X_iX_i^{-1} = X_i^{-1}X_i =1
\text{\;\;\; and \;\;\;}
X_iX_j = q^{\lambda_{ij}}X_jX_i
$$
for all $1 \le i,j \le m$.
Then $\cT_q(\LL)$ is an Ore domain and can be considered
as a subring of its skew field of fractions $\cF$, compare
\cite{BZ}.
The algebra $\cT_q(\LL)$ is called a \emph{based quantum torus}.
The quantum cluster algebra  $\cA_q := \cA_q(\LL,\wtB)$ is then
a certain $R$-subalgebra of the skew field $\cF$.
(We slightly deviate from the conventions of \cite{BZ} by considering 
the quantum
cluster algebra $\cA_q$ as an $R$-algebra 
and not as a $\Z[q^{\pm 1/2}]$-algebra as in \cite{BZ}.) 

Let $\cA := \cA(\wtB)$ be the (commutative) cluster algebra
(which is a $\K$-algebra) associated with $\wtB$, and let $\cU := \cU(\wtB)$ be its
upper cluster algebra.
The \emph{specialization of} $\cA_q$ for $q = 1$ is defined as
$$
\cA_1 := \K_1 \otimes_R \cA_q.
$$
It is not hard to see that there is a surjective $\K$-algebra homomorphism
\[
\cA_1 \to \cA.
\]
It seems that several authors (including us) 
implicitly assumed that 
the above is an isomorphism.
However, this appears to be far from obvious. 
Up to our knowledge this question was not discussed in the
existing literature. 
The following theorem is a positive result for a restricted class
of quantum cluster algebras.
This fixes a gap in the proof of \cite[Proposition~12.2]{GLS2}.
This proposition is crucial for the proof of the main result 
\cite[Theorem~12.3]{GLS2}.
We thank Alastair King \cite{K} for pointing out the gap.

\begin{Thm}\label{thm:mainresult}
Let $\cA_q$ be a quantum cluster algebra, and let $\cA$ be the
associated (commutative) cluster algebra.
Let $\cU$ be the upper cluster algebra of $\cA$.
We assume the following:
\begin{itemize}

\item[(i)]
$\cA = \cU$;

\item[(ii)]
$\cA$ is a $\Z$-graded cluster algebra with finite-dimensional homogeneous 
components.

\end{itemize}
Then we have 
$$
\K_1 \otimes_R \cA_q \cong \cA.
$$
\end{Thm}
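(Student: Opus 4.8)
The plan is to view $\cA_q$ as a $\Z$-graded free $R$-module and to compute the rank of each homogeneous component by squeezing it between the corresponding classical objects. By the quantum Laurent phenomenon \cite{BZ}, $\cA_q$ is contained in the initial based quantum torus $\cT_q(\LL)$, which is free over $R$ on the Laurent monomials in $X_1,\dots,X_m$; since $R$ is a principal ideal domain, the submodule $\cA_q$ is itself free over $R$. The $\Z$-grading of $\cA$ from hypothesis (ii) is the one attached to an assignment of integer degrees to the initial cluster variables for which every exchange relation is homogeneous; the same assignment makes each based quantum torus $\cT_q(\LL^{(t)})$ a graded $R$-algebra (with $R$ in degree $0$) and $\cA_q$ a graded subalgebra, each homogeneous component $(\cA_q)_d$ being again free over $R$, and it renders the surjection $\cA_1=\K_1\otimes_R\cA_q\twoheadrightarrow\cA$ homogeneous. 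Since $\dim_\K(\cA_1)_d=\dim_\K\bigl((\cA_q)_d/p(\cA_q)_d\bigr)=\rk_R(\cA_q)_d$ and $\dim_\K\cA_d<\infty$, it suffices to prove $\rk_R(\cA_q)_d=\dim_\K\cA_d$ for all $d$: a homogeneous surjection that is bijective in each degree is an isomorphism. The surjection already gives $\rk_R(\cA_q)_d\ge\dim_\K\cA_d$, so only the reverse inequality is at issue.

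For the upper bound I would bring in the quantum upper cluster algebra $\cU_q:=\bigcap_t\cT_q(\LL^{(t)})$, the intersection being taken inside $\cF$ over all seeds $t$. The quantum Laurent phenomenon gives $\cA_q\subseteq\cU_q$, hence $(\cA_q)_d\subseteq(\cU_q)_d$ and (submodules of free modules over the PID $R$) $\rk_R(\cA_q)_d\le\rk_R(\cU_q)_d$. The crucial inequality is $\rk_R(\cU_q)_d\le\dim_\K\cU_d$, which I would deduce from the claim that $\cU_q$ is $p$-pure in $\cT_q(\LL)$, i.e.\ $p\cT_q(\LL)\cap\cU_q=p\cU_q$. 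Granting this, reduction modulo $p$ yields an injective homogeneous $R$-map $\cU_q/p\cU_q\hookrightarrow\cT_q(\LL)/p\cT_q(\LL)$, the latter being the commutative Laurent ring over $\K_1$; and since putting $q^{1/2}=1$ converts the quantum exchange relations into the classical ones, the reduction of an element of $\cU_q$, read off in any seed, represents one and the same element of the classical ambient field, so the image of $\cU_q/p\cU_q$ lies in $\bigcap_t\bigl(\K_1\otimes_R\cT_q(\LL^{(t)})\bigr)=\cU$. Thus $\cU_q/p\cU_q$ embeds homogeneously into $\cU$, giving $\rk_R(\cU_q)_d=\dim_\K\bigl((\cU_q/p\cU_q)_d\bigr)\le\dim_\K\cU_d$. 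Finally hypothesis (i) gives $\cU_d=\cA_d$, and
\[
\dim_\K\cA_d\;\le\;\rk_R(\cA_q)_d\;\le\;\rk_R(\cU_q)_d\;\le\;\dim_\K\cU_d\;=\;\dim_\K\cA_d
\]
forces equality throughout, which proves the theorem.

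I expect the technical heart to be the $p$-purity of $\cU_q$ in $\cT_q(\LL)$ — equivalently, the compatibility of the specialization $q^{1/2}=1$ with mutation, which is also what makes the reduction of an element of $\cU_q$ a well-defined element of $\cU$. Concretely, given $y\in\cU_q$ with $y=pz$ for some $z\in\cT_q(\LL)$, one must show $z\in\cT_q(\LL^{(t)})$ for every seed $t$; expanding $y$ as a Laurent polynomial with coefficients in $R$ in the quantum cluster of $t$, this amounts to each such coefficient being divisible by $p$, i.e.\ to the vanishing of the reduction of $y$ when it is expressed in the $t$-chart — precisely the point at which one needs that reducing modulo $p$ commutes with passage between clusters. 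Everything else is bookkeeping with graded free modules over the PID $R$.
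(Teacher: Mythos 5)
Your proposal follows the same architecture as the paper's proof: use the grading to reduce to a degreewise rank count, squeeze $\cA_q$ inside the upper quantum cluster algebra $\cU_q$, and control $\cU_q$ by showing it specializes injectively into the Laurent ring, with image between $\cA$ and $\cU$; your finishing move (a dimension squeeze in each degree) differs only cosmetically from the paper's, which instead applies a Tor/finite-length argument to the graded pieces of $\cU_q/\cA_q$, and your auxiliary steps correspond to Lemma~\ref{lem:surjhom}, Lemma~\ref{lem:graded1} and Corollary~\ref{cor:Uq}. The problem is that the one claim you explicitly defer --- the $p$-purity $p\cT_q(\LL)\cap\cU_q=p\cU_q$, i.e.\ that if $y\in\cU_q$ satisfies $y=pz$ with $z\in\cT_qM$ then $z$ lies in every quantum torus $\cT_qM'$ --- is precisely the paper's key Proposition~\ref{prop:key}, and it is the mathematical heart of the theorem, not bookkeeping. ``Reducing modulo $p$ commutes with passage between clusters'' is the statement to be proved, not an available principle: a priori an element of $\cU_q$ could be divisible by $p$ when expanded in the initial quantum torus while having a coefficient that is a unit when expanded in an adjacent one.

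To rule this out one really has to compute. The paper expands $y\in\cT_qM\cap\cT_qM_1$ along powers of $X_1$ and of $X_1'$, uses the quantum exchange relation to write $X_1^l=(X_1')^{-l}\,Q^{(2l-1)m/2}\cdots Q^{m/2}$ with $Q^{km/2}=q^{km_+/2}M(\bb_+)+q^{km_-/2}M(\bb_-)$ (keeping careful track of the $q$-powers produced by commuting $(X_1')^{-1}$ past $M(\bb_\pm)$), deduces $d_{-l}=Q^{(2l-1)m/2}\cdots Q^{m/2}c_l$ relating the coefficients of the two expansions, and then invokes that $\cT_qM/p\cT_qM$ is a domain (Lemma~\ref{lem:div}) together with the fact that no $Q^{km/2}$ is divisible by $p$ to conclude that $c_l$ is divisible by $p$ if and only if $d_{-l}$ is; purity of the full intersection then follows seed by adjacent seed (with uniqueness of $z$ coming from torsion-freeness inside $\cF$), and injectivity of $\K_1\otimes_R j$ via the criterion of Lemma~\ref{lem:inj}. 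None of this noncommutative manipulation appears in your sketch, so as written the proposal assumes the paper's key proposition rather than proving it; the rest of your argument is sound and essentially the published route.
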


The paper is organized as follows:
After some preparations in Section~\ref{sec2}, we consider
specializations of upper quantum cluster algebras and based quantum tori
in Section~\ref{sec3}.
Section~\ref{sec4} deals with gradings for different kinds of
(quantum) cluster algebras.
The proof of the main result Theorem~\ref{thm:mainresult} is completed in Section~\ref{sec5}.


\section{Divisibility and injectivity of tensor product maps}\label{sec2}


Let $R$ be a (commutative) principal ideal domain, and let 
$p \in R$ be a prime element.
We abbreviate $Q := R/(p)$. 

If $X$ is an $R$-module, we say that $x \in X$ is 
\emph{divisible} by $p$ (in $X$), 
if there exists some $x' \in X$ with $x = p\cdot x'$. 
The following result is probably well known. 
We include the easy proof for the convenience of the reader.

\begin{Lem} \label{lem:inj}
Let $F$ be a free $R$-module, and let $U \subseteq F$ be
a submodule.
Let  $j\df U \to F$ be the inclusion map. 
Then the following are equivalent:
\begin{itemize}

\item[(i)]
The map
\[
Q\otimes_R j\df Q\otimes_R U\ra Q\otimes_R F
\]
is injective;

\item[(ii)]
We have
$\Tor_1^R(Q,F/U) = 0$.

\item[(iii)]
For all $u \in U$ we have
\begin{equation}\label{eq:cond-inj}
   u\text{ is divisible by } p\text{ in } U\Iff u\text{ is divisible by }
    p \text{ in } F
\end{equation}

\end{itemize}
\end{Lem}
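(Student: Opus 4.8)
The plan is to read off everything from the long exact sequence in $\Tor$ attached to the short exact sequence
\[
0 \ra U \xrightarrow{\ j\ } F \ra F/U \ra 0 .
\]
Applying $Q\otimes_R(-)$ gives the exact sequence
\[
\Tor_1^R(Q,F) \ra \Tor_1^R(Q,F/U) \ra Q\otimes_R U \xrightarrow{\ Q\otimes_R j\ } Q\otimes_R F .
\]
Since $F$ is free it is flat, so $\Tor_1^R(Q,F)=0$; concretely, $Q=R/(p)$ admits the length-one free resolution $0\ra R \xrightarrow{\ p\ } R \ra Q \ra 0$, so $\Tor_1^R(Q,F)$ is the $p$-torsion submodule of $F$, which vanishes. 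Hence the displayed sequence identifies $\Tor_1^R(Q,F/U)$ with $\Ker(Q\otimes_R j)$, and the equivalence of (i) and (ii) is immediate.

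For (i)$\,\Iff\,$(iii) I would use the functorial isomorphism $Q\otimes_R M \cong M/pM$, valid for every $R$-module $M$. Under it the map $Q\otimes_R j$ becomes the natural map $\bar\jmath\df U/pU \ra F/pF$ induced by the inclusion. A class $u+pU$ lies in $\Ker(\bar\jmath)$ exactly when $u\in pF$, that is, when $u$ is divisible by $p$ in $F$; and $u+pU=0$ exactly when $u\in pU$, that is, when $u$ is divisible by $p$ in $U$. Thus $\bar\jmath$ is injective if and only if every $u\in U$ divisible by $p$ in $F$ is already divisible by $p$ in $U$. Since the reverse implication is automatic from $U\subseteq F$, this is precisely condition~\eqref{eq:cond-inj}, which yields (i)$\,\Iff\,$(iii).

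I do not expect a genuine obstacle: the whole argument is a short diagram chase, and the only substantive input beyond formalities is the vanishing $\Tor_1^R(Q,F)=0$, which is nothing more than torsion-freeness of the free module $F$. The one point to phrase with mild care is the naturality of the assignment $M\mapsto M/pM$, so that the identification of $Q\otimes_R j$ with $\bar\jmath$, and hence the bookkeeping of kernels above, is legitimate; everything else is purely formal.
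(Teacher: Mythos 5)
Your argument is correct. It shares with the paper the first half: both apply $Q\otimes_R(-)$ to the sequence $0\to U\to F\to F/U\to 0$ and use flatness of the free module $F$ (equivalently $\Tor_1^R(Q,F)=0$) to identify $\Tor_1^R(Q,F/U)$ with $\Ker(Q\otimes_R j)$, giving (i)$\Leftrightarrow$(ii). Where you diverge is in how condition (iii) is brought in. The paper proves (ii)$\Leftrightarrow$(iii): it identifies $\Tor_1^R(Q,F/U)$ with the $p$-torsion of $F/U$, reformulates its vanishing as ``$pf\in U$ forces $f\in U$'', and then needs the torsion-freeness of the free module $F$ (via $p(f-f')=0\Rightarrow f=f'$) to match this with the divisibility condition. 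You instead prove (i)$\Leftrightarrow$(iii) directly, by rewriting $Q\otimes_R j$ as the induced map $U/pU\to F/pF$ and reading off that its kernel is $(U\cap pF)/pU$; this step is purely formal, valid for arbitrary $U\subseteq F$ with no freeness hypothesis, so in your decomposition the freeness of $F$ enters only through the Tor-vanishing needed for (ii). The two routes are of comparable length; yours isolates the role of freeness a bit more cleanly, while the paper's makes the element-level content of (ii) (the $p$-torsion of $F/U$) explicit, which is the form it actually reuses later in the proof of Theorem~\ref{thm:main}. The only point you should make explicit, as you note yourself, is the naturality of $Q\otimes_R M\cong M/pM$ justifying the identification of $Q\otimes_R j$ with $\bar\jmath$.
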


\begin{proof} We consider the short exact sequence 
\[
0\ra U\xrightarrow{j} F\rightarrow F/U\rightarrow 0.
\] 
Since $F$ is free,
and in particular flat as an $R$-module, $Q\otimes_R j$ is injective if and
only if 
\[
0 = \Tor_1^R(Q, F/U) \equiv \{ \bar{f}\in F/U \mid 
p \cdot \bar{f} = 0 \},
\]
see for example \cite[Section~3.1, Example~3.1.7]{W}. 
Thus, $\Tor_1^R(Q,F/U)=0$ if and only if
for all $f\in F\setminus U$ we have 
$p\cdot f \notin U$. 

Thus, 
we have to show that the following are equivalent:
\begin{itemize}

\item[(a)]
$\{ f \in F \setminus U \mid p\cdot f \in U \} = \varnothing$;

\item[(b)]
Let $u \in U$ with $u = pf$ for some $f \in F$.
Then $u = pf'$ for some $f' \in U$.

\end{itemize}
Suppose (a) holds.
Then condition (b) becomes empty and is therefore satisfied.
Suppose (b) holds.
Thus let $u \in U$ with $u = pf = pf'$ with $f \in F$ and
$f' \in U$.
This implies $0 = p(f-f')$.
Since $R$ is a domain and $F$ is free, this yields $f = f'$.
Therefore (a) holds.
\end{proof}


\section{Quantum cluster algebras}\label{sec3}


As in the introduction, let
$R := \K[q^{\pm 1/2}]$, $p := q^{1/2}-1$, and $\K_1 := R/pR$.
Let $(\LL,\wtB)$ be a compatible pair, and let $\cT_q(\LL)$, $\cF$ and
$\cA_q := \cA_q(\LL,\wtB)$ be defined as before.

The \emph{initial quantum seed} of $\cA_q$ is denoted by $(M,\wtB)$,
where $M\df \Z^m \to \cF$ is defined
by $M(a_1,\ldots,a_m) := X_1^{a_1} \cdots X_m^{a_m}$ for all
$(a_1,\ldots,a_m) \in \Z^m$. 

Let $(M',\wtB')$ be a quantum seed of $\cA_q$ in the sense of 
\cite[Definition~4.5]{BZ}. 
In particular, $M'$ is a map $\Z^m \to \cF$ such that
$M'(\be_1),\ldots,M'(\be_m)$ is a 
free generating set of $\cF$ and $M'(a_1,\ldots,a_m) = 
M'(\be_1)^{a_1} \cdots M'(\be_m)^{a_m}$ for all $(a_1,\ldots,a_m) \in \Z^m$.
The based quantum torus 
$\cT_qM'$ is a free $R$-module, which has the set 
$\{ M'(c) \mid c \in \Z^m \}$ as an 
$R$-basis. 
 
The following lemma is straightfoward.

\begin{Lem}\label{lem:isom1}
Let $S$ be a commutative ring, and let
$X$ be an $S$-module.
For any ideal $I$ in $S$ there is an isomorphism
\[
\eta_{I,X}\df S/I \otimes_S X \to X/IX
\]
defined by $(s+I) \otimes x \mapsto sx + IX$
for all $s \in I$ and $x \in X$.
\end{Lem}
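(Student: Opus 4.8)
The plan is to produce $\eta_{I,X}$ together with an explicit two-sided inverse, since the formula in the statement makes both directions transparent. First I would check that $\eta_{I,X}$ is well defined: the assignment $(s,x)\mapsto sx+IX$ from $S\times X$ to $X/IX$ is $S$-bilinear, and it does not depend on the choice of representative $s$ of the class $s+I\in S/I$, because $(s-s')x\in IX$ whenever $s-s'\in I$. Hence it factors through the tensor product and yields an $S$-linear map $\eta_{I,X}\df S/I\otimes_S X\to X/IX$ with the asserted effect on elementary tensors, and it is visibly surjective since $x+IX=\eta_{I,X}((1+I)\otimes x)$.

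Next I would construct the inverse. The $S$-linear map $X\to S/I\otimes_S X$, $x\mapsto (1+I)\otimes x$, sends $sx$ with $s\in I$ to $(s+I)\otimes x=0$, so it kills $IX$ and therefore descends to an $S$-linear map $\vartheta\df X/IX\to S/I\otimes_S X$. A one-line computation on elementary tensors $(s+I)\otimes x$ and on cosets $x+IX$ then gives $\eta_{I,X}\circ\vartheta=\id$ and $\vartheta\circ\eta_{I,X}=\id$, so $\eta_{I,X}$ is an isomorphism.

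Alternatively, and more conceptually, I could apply the right-exact functor $-\otimes_S X$ to the exact sequence $I\hookrightarrow S\twoheadrightarrow S/I\to 0$ to obtain the exact sequence $I\otimes_S X\to X\to (S/I)\otimes_S X\to 0$, after identifying $S\otimes_S X\cong X$ canonically; the image of the first map is exactly $IX$, so the cokernel is $X/IX$, and unwinding the identifications recovers the formula for $\eta_{I,X}$. Either way there is no genuine obstacle here — the only points requiring a line of care are the independence of $\eta_{I,X}$ of the chosen representative in $S/I$, and, in the second approach, the remark that the image of $I\otimes_S X$ in $X$ equals the submodule $IX$ (it is a submodule containing every product $sx$ with $s\in I$, $x\in X$, and $IX$ is by definition the smallest such submodule).
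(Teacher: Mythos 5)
Your proof is correct; the paper itself gives no argument for this lemma, dismissing it as straightforward, and your write-up supplies exactly the standard details one would expect (well-definedness via bilinearity, the inverse $x+IX\mapsto (1+I)\otimes x$, or alternatively right-exactness of $-\otimes_S X$ applied to $0\to I\to S\to S/I\to 0$). Either of your two routes is complete and matches the intended routine verification; note only that the quantifier in the statement should read $s\in S$ rather than $s\in I$, a harmless slip in the paper that your argument implicitly corrects.
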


\begin{Lem}\label{lem:isom2}
For $T := \cT_q(\LL)$ the following hold:
\begin{itemize}

\item[(i)]
$\eta_{pR,T}\df \K_1 \otimes_R T \to T/pT$ is an isomorphism;

\item[(ii)]
The $R$-basis $\{ M(\be) \mid \be \in \Z^m \}$ of $T$ yields a
$\K$-basis $\{ \ov{M}(\be) \mid \be \in \Z^m \}$ of $T/pT$,
where 
$\ov{M}(\be) := \eta_{pR,T}(1 \otimes M(\be))$.

\end{itemize}
\end{Lem}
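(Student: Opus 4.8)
The plan is to reduce everything to Lemma~\ref{lem:isom1} and then identify the two sides explicitly. First I would apply Lemma~\ref{lem:isom1} with $S := R$, $X := T = \cT_q(\LL)$, and $I := pR$. Since $R$ is commutative and $T$ is an $R$-module (indeed an $R$-algebra, but we only need the module structure for the lemma), the lemma immediately produces an $R$-module isomorphism $\eta_{pR,T}\colon \K_1 \otimes_R T \to T/pT$ sending $(f + pR) \otimes x \mapsto fx + pT$. This proves part~(i). One should perhaps remark that this map is in fact a $\K$-algebra isomorphism: both $\K_1 \otimes_R T$ and $T/pT$ carry natural ring structures (the former by $S/I \otimes_S X$ when $X$ is an $S$-algebra, the latter as a quotient ring since $pT$ is a two-sided ideal — note $p$ is central in $T$), and $\eta_{pR,T}$ respects multiplication by construction. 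But for the purposes of the paper the module isomorphism is what is invoked.

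For part~(ii), the starting point is that $\{M(\be) \mid \be \in \Z^m\}$ is an $R$-basis of $T$: this is exactly the definition of $T$ as a based quantum torus, whose elements are $R$-linear combinations of the ordered monomials $X_1^{a_1}\cdots X_m^{a_m}$. A free $R$-module with basis $(b_i)_{i \in I}$ has the property that, for any ideal $I$, the quotient $F/IF$ is a free $Q$-module (here $Q = R/I = \K_1 \cong \K$) with basis the images $(\bar{b}_i)_{i\in I}$ — this is because $F \cong \bigoplus_i R$ and tensoring (equivalently, applying $-/I-$) commutes with direct sums, so $F/IF \cong \bigoplus_i R/I$. Applying this with $F = T$, $I = pR$, and the basis indexed by $\Z^m$, we get that $\{M(\be) + pT \mid \be \in \Z^m\}$ is a $\K_1$-basis of $T/pT$. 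Since $\ov{M}(\be) := \eta_{pR,T}(1 \otimes M(\be)) = M(\be) + pT$ by the explicit formula for $\eta$, and $\K_1 \cong \K$ as a ring, this is precisely the assertion in~(ii).

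I do not expect any genuine obstacle here; the lemma is labelled "straightforward" for good reason and this one is of the same nature. The only point requiring a modicum of care is keeping the two distinct roles of $\K$ straight — namely $\K_1 = R/pR$ viewed as an $R$-module (this is how it acts as scalars on the tensor product) versus $\K$ as an abstract field — which is exactly the distinction the paper flagged in the introduction. A reader-friendly write-up would: (1) cite Lemma~\ref{lem:isom1} for~(i); (2) recall that $T$ is free over $R$ on the monomial basis; (3) invoke the elementary fact that base change of a free module along $R \to R/pR$ carries a basis to a basis; and (4) unwind the definition of $\ov{M}(\be)$ to match the image of the basis under $\eta_{pR,T}$. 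If one also wants the ring-theoretic refinement of~(i), the extra ingredient is simply that $p = q^{1/2} - 1$ is central in $\cT_q(\LL)$, so $pT$ is a two-sided ideal and $T/pT$ is a genuine quotient ring isomorphic as such to $\K_1 \otimes_R T$.
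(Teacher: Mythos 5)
Your proposal is correct and follows essentially the same route as the paper: part~(i) by direct application of Lemma~\ref{lem:isom1}, and part~(ii) by using that $T$ is free over $R$ on the monomial basis so that base change along $R \to R/pR$ carries that basis to a $\K$-basis (the paper phrases this on the side of $\K_1 \otimes_R T$ rather than $T/pT$, which is an immaterial difference given the isomorphism of~(i)).
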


\begin{proof}
Part (i) is just a special case of Lemma~\ref{lem:isom1}.
We observe that $\{ 1 \otimes M(\be) \mid \be \in \Z^m \}$
is a $\K$-basis of $\K_1 \otimes_R T$.
(Here we use that $T$ is a free $R$-module and that
$\K_1 \otimes_R R \cong \K_1$.)
Now (ii) follows from (i).
\end{proof}

\begin{Lem}\label{lem:surjhom}
With $T = \cT_q(\LL)$ as above, 
the algebra isomorphism
\[
\eta_{pR,T}\df \K_1 \otimes_R T \to T/pT
\]
restricts to a surjective algebra homomorphism
\[
\eta\df \K_1 \otimes_R \cA_q(\LL,\wtB) \to \cA(\wtB).
\]
\end{Lem}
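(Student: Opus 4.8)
The plan is to factor the wanted map as the composition
\[
\K_1 \otimes_R \cA_q(\LL,\wtB) \xrightarrow{\ \K_1 \otimes_R j\ } \K_1 \otimes_R T \xrightarrow{\ \eta_{pR,T}\ } T/pT ,
\]
where $j\df \cA_q(\LL,\wtB) \to T = \cT_q(\LL)$ is the inclusion provided by the quantum Laurent phenomenon of \cite{BZ}. Both arrows are $\K$-algebra homomorphisms — the second is the isomorphism of Lemma~\ref{lem:isom2}(i) — so $\eta := \eta_{pR,T}\circ(\K_1\otimes_R j)$ is an algebra homomorphism, and the whole point is to show that its image equals $\cA(\wtB)$. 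Note that no injectivity of $\K_1\otimes_R j$ is asserted at this stage; that is the delicate issue that Theorem~\ref{thm:mainresult} handles under the extra hypotheses (i) and (ii).

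First I would pin down the target. By Lemma~\ref{lem:isom2}(ii) the classes $\ov{M}(\be)$, $\be\in\Z^m$, form a $\K$-basis of $T/pT$, and since $p = q^{1/2}-1$ forces $q^{1/2}\equiv 1$ and hence $q^{\lambda_{ij}}\equiv 1\pmod p$, the relations $X_iX_j = q^{\lambda_{ij}}X_jX_i$ become $\ov{X}_i\ov{X}_j = \ov{X}_j\ov{X}_i$. Thus $T/pT$ is the commutative Laurent polynomial ring $\K[\ov{X}_1^{\pm 1},\ldots,\ov{X}_m^{\pm 1}]$, with $\ov{M}(\ba)\ov{M}(\bb) = \ov{M}(\ba+\bb)$ for all $\ba,\bb\in\Z^m$. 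Identifying each $\ov{X}_i$ with the $i$-th initial cluster variable of $\cA(\wtB)$, the classical Laurent phenomenon gives $\cA(\wtB)\subseteq T/pT$.

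Next I would show, by induction on the number of mutations from the initial seed, that $\eta_{pR,T}(1\otimes Y)$ is the corresponding classical cluster variable for every quantum cluster variable $Y$ of $\cA_q(\LL,\wtB)$, the frozen variables $X_i^{\pm1}$ going to $\ov{X}_i^{\pm1}$. The base case is Lemma~\ref{lem:isom2}(ii). For the inductive step one unwinds the quantum seed mutation $\mu_k$ of \cite[Definition~4.5]{BZ}: the new generator $M'(\be_k)$ is a sum of two monomials in the $M(\be_i)$, in which $M(\be_k)$ occurs only with exponent $-1$, each weighted by a power of $q^{1/2}$; applying $\eta_{pR,T}$ kills those powers and turns each quantum monomial into the corresponding commutative one, so the quantum exchange relation specializes to the classical exchange relation in direction $k$ for the matrix $\wtB$. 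Hence the specializations of the quantum cluster variables of the mutated seed are the classical cluster variables of that seed.

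Granting this, one concludes as follows. The $R$-algebra $\cA_q(\LL,\wtB)$ is generated by the quantum cluster variables of all its seeds together with the inverse frozen variables $X_i^{\pm1}$; hence $\K_1\otimes_R\cA_q(\LL,\wtB)$ is generated as a $\K$-algebra by the elements $1\otimes g$ with $g$ running over these generators, and by the induction their images under $\eta$ are exactly the classical cluster variables of $\cA(\wtB)$ and the $\ov{X}_i^{\pm1}$. The subalgebra they generate is precisely $\cA(\wtB)$, so $\eta$ has image $\cA(\wtB)$; in particular $\eta$ is surjective. The main obstacle is the bookkeeping in the inductive step: one must verify, from the precise normalization of toric frames and of mutation in \cite{BZ}, that \emph{every} power of $q^{1/2}$ occurring — those weighting the two exchange monomials as well as those produced when reordering noncommutative products into the standard form $M(\be)$ — is congruent to $1$ modulo $p$, so that the specialized relation is literally the classical exchange relation with the exponents read off from $\wtB$; this is precisely what forces the specialized quantum cluster variables to satisfy the classical recursion.
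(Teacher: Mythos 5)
Your proposal is correct and follows essentially the same route as the paper: the paper's proof also consists of comparing the quantum seed mutation of \cite[Proposition~4.9]{BZ} with classical seed mutation to see that $\eta_{pR,T}$ sends the elements $1\otimes x_q$, for $x_q$ a quantum cluster variable, onto the cluster variables of $\cA(\wtB)$, whence surjectivity. Your write-up merely makes explicit the identification of $T/pT$ with the commutative Laurent ring, the induction over mutations, and the generation of both algebras by (quantum) cluster variables and inverse frozen variables, all of which the paper leaves implicit.
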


\begin{proof}
Comparing the mutation of quantum seeds of the quantum
cluster algebra $\cA_q(\LL,\wtB)$ as described in
\cite[Proposition~4.9]{BZ} with the mutation of seeds of the
cluster algebra $\cA(\wtB)$, shows that $\eta_{pR,T}$ maps 
the set 
\[
\{ 1 \otimes x_q \mid x_q \text{ is a quantum cluster variable of }
\cA_q(\LL,\wtB) \}
\] 
surjectively onto the set of cluster variables of $\cA(\wtB)$.
\end{proof}

The surjective algebra homomorphism
$\eta$ in Lemma~\ref{lem:surjhom} might not be an isomorphism.
Here one has to keep in mind that the quantum exchange relations 
and the exchange relations might not be the defining relations
of $\cA_q(\LL,\wtB)$ and $\cA(\wtB)$, respectively.
Our aim is to show that under some assumptions on $\cA(\wtB)$,
$\eta$ is indeed an isomorphism.

We follow the convention from Section~\ref{sec2}
  and say that $x\in\cT_q(\LL)$ is \emph{divisible} by $p$  if
$x \in p \cT_q(\LL)$. 


\begin{Lem}\label{lem:div}
Let $T = \cT_q(\LL)$, and
let $x,y\in T$ such that $xy$ is divisible by $p$. 
Then at least one of $x$ and $y$ is divisible by $p$.
\end{Lem}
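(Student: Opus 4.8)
The statement to prove is: in $T = \cT_q(\LL)$, if $xy \in pT$ then $x \in pT$ or $y \in pT$. Equivalently, by Lemma~\ref{lem:isom2}, the quotient ring $T/pT$ is a domain, since $\eta_{pR,T}$ identifies $T/pT$ with $\K_1 \otimes_R T$ and the statement "$xy$ divisible by $p$ implies $x$ or $y$ divisible by $p$" is exactly the assertion that $pT$ is a prime ideal of $T$. So the plan is to show that $T/pT$ is an integral domain.

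First I would describe $T/pT$ explicitly. By Lemma~\ref{lem:isom2}, $T/pT$ has $\K$-basis $\{\ov M(\be) \mid \be \in \Z^m\}$, and since $q^{1/2} \equiv 1 \pmod p$, every power $q^{\lambda_{ij}}$ becomes $1$ in $\K_1$, so the relations $X_iX_j = q^{\lambda_{ij}}X_jX_i$ degenerate to $\ov X_i\,\ov X_j = \ov X_j\,\ov X_i$. Hence $T/pT$ is (a quotient of, and in fact equal to) the commutative Laurent polynomial ring $\K[\ov X_1^{\pm1},\ldots,\ov X_m^{\pm1}]$: indeed its $\K$-basis $\{\ov M(\be)\}$ is precisely the monomial basis of that Laurent ring, so the natural surjection from the Laurent polynomial ring onto $T/pT$ sends basis to basis and is an isomorphism. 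Since a Laurent polynomial ring over a field is an integral domain, $T/pT$ is a domain, and we are done.

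The one point requiring a little care — and the main obstacle — is justifying that $T/pT$ really is the full commutative Laurent ring and not some proper quotient of it: a priori one only knows that the commutativity relations hold in $T/pT$, giving a surjection $\K[\ov X_i^{\pm1}] \twoheadrightarrow T/pT$, and one must rule out extra relations. This is exactly where Lemma~\ref{lem:isom2}(ii) is used: it tells us the images $\ov M(\be)$ of the standard monomial basis of $T$ remain $\K$-linearly independent in $T/pT$, which forces the surjection above to be injective on the monomial basis, hence an isomorphism. With that in hand the argument is immediate.

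Alternatively, and perhaps more cleanly for the write-up, I would avoid even mentioning the Laurent ring and argue directly: suppose $x,y \in T$ with $xy = pz$, $z \in T$, but neither $x$ nor $y$ divisible by $p$. Write $x = \sum_\be a_\be M(\be)$, $y = \sum_\bc b_\bc M(\bc)$ with $a_\be, b_\bc \in R$; "not divisible by $p$" means some $a_\be$ and some $b_\bc$ are units in $R_{(p)}$, i.e.\ lie outside $pR$ (here one uses that $R = \K[q^{\pm1/2}]$ is a PID and $p$ is prime, so $R/pR = \K_1$ is a field and "$x \notin pT$" reduces to "some coefficient $a_\be \notin pR$"). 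Reducing the identity $xy = pz$ modulo $p$ and using that $\{\ov M(\be)\}$ is a $\K$-basis, the product $\ov x\,\ov y = 0$ in $T/pT$; but $\ov x$ and $\ov y$ are nonzero, and expanding $\ov x\,\ov y$ in the basis $\{\ov M(\be)\}$ — using $\ov M(\be)\ov M(\bc) = \ov M(\be+\bc)$, which holds because the structure constant $q^{(\text{something})}$ reduces to $1$ — one sees the coefficient of $\ov M(\be_0 + \bc_0)$ for suitable maximal $\be_0, \bc_0$ (in a term order on $\Z^m$) is $\ov a_{\be_0}\ov b_{\bc_0} \ne 0$, a contradiction. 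Either phrasing works; the essential content in both is Lemma~\ref{lem:isom2} together with the degeneration of the $q$-commutation relation at $q=1$.
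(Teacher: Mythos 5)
Your proposal is correct and follows essentially the same route as the paper: reduce modulo $p$, identify $T/pT$ with the commutative Laurent polynomial ring $\K[x_1^{\pm1},\ldots,x_m^{\pm1}]$ (the paper asserts this via $T/pT\cong\K_1\otimes_R T$; you justify it more carefully using the basis from Lemma~\ref{lem:isom2}(ii) and the degeneration $q^{1/2}\equiv 1$), and conclude that $pT$ is completely prime since the quotient is a domain.
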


\begin{proof} 
By assumption we have $xy = pz$ for some $z \in T$.
Furthermore, we know that
\[
T/pT \cong \K_1 \otimes_R T
\cong
\K[x_1^{\pm 1},\ldots,x_m^{\pm 1}].
\] 
So $T/pT$ is a domain, and therefore has no zero 
divisors. 
Now $xy \in pT$ implies that $x \in pT$ or $y \in pT$.
This finishes the proof.
\end{proof}

Let
\[
\cU_q(\LL,\wtB) := \cT_qM \cap \cT_qM_1 \cap \cdots \cap \cT_qM_n
=
\bigcap_{(M',\wtB') \in \cS} \cT_qM',
\]
be the \emph{upper quantum cluster algebra},
where we abbreviated 
$(M_k,\wtB_k) := \mu_k(M,\wtB)$ and $\cS$ denotes
the mutation class of $(M,\wtB)$, compare \cite[Section~5]{BZ}.

The following is our key observation.

\begin{Prop} \label{prop:key}
For any $1 \le k \le n$ we have
\[
((q^{1/2}-1)\cT_qM)\cap \cT_qM_k = \cT_qM\cap ((q^{1/2}-1)\cT_qM_k).
\]
\end{Prop}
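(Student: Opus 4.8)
The plan is to compare the two based quantum tori $\cT_qM$ and $\cT_qM_k$ inside their common skew field $\cF$, and to understand how the ring $R = \K[q^{\pm 1/2}]$ (localized or not at $p = q^{1/2}-1$) interacts with the free $R$-module structures. The key point is that both $\cT_qM$ and $\cT_qM_k$ are free $R$-modules with explicit monomial bases $\{M(c)\}_{c\in\Z^m}$ and $\{M_k(c)\}_{c\in\Z^m}$, and by the mutation formula \cite[Proposition~4.9]{BZ} the variables $M_k(\be_i)$ are Laurent monomials in the $M(\be_j)$ for $i \neq k$, while $M_k(\be_k)$ is a \emph{binomial} expression in the $M(\be_j)$'s with coefficients that are powers of $q^{1/2}$. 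Thus inside $\cT_qM[M_k(\be_k)]$ one has good control, and the question reduces to a statement about a rank-one extension.

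First I would reduce the problem to a single noncommutative polynomial variable. Writing $y := M_k(\be_k)$ (the new quantum cluster variable) and $Y := M(\be_k)$ (the old one), the exchange relation has the shape $Y\cdot y = \text{(some }q\text{-power)}\cdot M(c') + \text{(some }q\text{-power)}\cdot M(c'')$ for suitable $c', c'' \in \Z^m$, and $y$ $q$-commutes with all the other $M(\be_j)$, $j\neq k$. So the ring generated by $\cT_qM$ and $\cT_qM_k$ together is of the form $\cT_q(\LL')[Y^{\pm1}, y]$ where $\cT_q(\LL')$ is the based quantum torus on the $m-1$ frozen-at-$k$ directions; concretely both $\cT_qM$ and $\cT_qM_k$ live inside $A := \cT_q(\LL')\langle Y^{\pm 1}, y^{\pm 1}\rangle / (\text{exchange})$. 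Then $\cT_qM$ has $R$-basis the monomials $M(c)$ with $c_k \in \Z$ arbitrary, and $\cT_qM_k$ has $R$-basis the monomials with the exponent of $y$ arbitrary; the intersection $\cT_qM\cap\cT_qM_k$ consists of elements expressible simultaneously in nonnegative-and-nonpositive powers on both sides, which one can make explicit.

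The core of the proof is then to verify condition \eqref{eq:cond-inj} of Lemma~\ref{lem:inj} in the symmetric form stated in the Proposition: if $u \in \cT_qM\cap\cT_qM_k$ and $u = p\cdot v$ for some $v\in\cT_qM$, then in fact $v\in\cT_qM_k$ as well (and symmetrically). Here I would use that $v = u/p$ is a well-defined element of $\cF$, so it suffices to show that an element of $\cT_qM$ which happens to lie in $\cT_qM_k$ after we already know it equals $u/p$ — but more precisely: $v\in\cT_qM$ and $pv = u \in\cT_qM_k$, and since $\cT_qM_k$ is a free $R$-module and $p$ is prime in $R$, I would show that divisibility of $pv$ by $p$ inside $\cT_qM_k$ is automatic and that the quotient is forced to be $v$. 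The mechanism is Lemma~\ref{lem:div}: reduction modulo $p$ turns $\cT_qM$ into the commutative Laurent ring $\K[x_1^{\pm1},\dots,x_m^{\pm1}]$ and turns $\cT_qM_k$ into another such Laurent ring, and crucially the images of $Y$ and $y$ in the common reduction mod $p$ of the ambient ring $A$ satisfy the \emph{specialized} (classical) exchange relation $\bar Y\bar y = \bar M(c') + \bar M(c'')$, which is irreducible enough that $\overline{\cT_qM}\cap\overline{\cT_qM_k}$ in $\bar A$ behaves compatibly. Concretely: take $u\in\cT_qM\cap\cT_qM_k$ with $u = pv$, $v\in\cT_qM$; expand $v = \sum_{j} v_j Y^j$ with $v_j\in\cT_q(\LL')[Y^{?}]$ finite sums... and track, using the exchange relation, the expansion of $u$ in the $y$-adic form; because $u\in\cT_qM_k$ its $y$-expansion has bounded (both-sided) support, and because each coefficient is divisible by $p$ in $R$ (as the monomials $M_k(c)$ are an $R$-basis), one peels off a factor of $p$ from $v$ term by term, staying inside $\cT_qM_k$.

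I expect the main obstacle to be the bookkeeping in the intersection $\cT_qM\cap\cT_qM_k$ and the precise form of the mutation/exchange relation with its $q^{1/2}$-coefficients: one must check that when a Laurent monomial in $Y$ is re-expressed in terms of $y$ via the exchange relation, the coefficients that appear are units in $R$ times integer powers of $q^{1/2}$, so that divisibility by $p$ of a sum $\sum_c r_c M_k(c)$ (with $r_c\in R$) forces each $r_c$ to be divisible by $p$, with no accidental cancellation introducing or removing a factor of $p$. Equivalently, the heart is to show that the change-of-basis matrix between the relevant finite-rank free $R$-submodules spanned by $\{M(c)\}$ and $\{M_k(c)\}$ (on the overlap region) has entries in $R$ with unit ``leading'' behavior — i.e. is unimodular after inverting nothing — so that passing to $Q\otimes_R(-)$ commutes with the intersection. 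Once that is in place, the symmetric divisibility condition \eqref{eq:cond-inj} follows and the Proposition drops out; this is exactly the input needed to later invoke Lemma~\ref{lem:inj}.
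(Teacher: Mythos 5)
Your overall strategy is the same as the paper's: reduce to one mutation direction, expand elements of $\cT_qM$ (resp.\ $\cT_qM_k$) as finite sums $\sum_j X_1^j c_j$ (resp.\ $\sum_l (X_1')^l d_l$) with coefficients in the common subtorus on $\be_2,\dots,\be_m$, observe that $p$-divisibility in each torus is detected coefficientwise, and transfer divisibility through the exchange relation. But the step you yourself single out as the heart is wrong as stated, and it is exactly where the content lies. When $X_1^l$ is rewritten in terms of $(X_1')^{-l}$, the factors that appear are \emph{not} units of $R$ times powers of $q^{1/2}$: they are products of binomials of the form $q^{km_+/2}M(\bb_+)+q^{km_-/2}M(\bb_-)$ with $\bb_\pm\in\{0\}\times\N^{m-1}$, which are non-monomial and non-invertible elements of the subtorus. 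So there is no ``unimodular change-of-basis matrix over $R$'' between the two monomial bases on the overlap; the relation between the coefficient systems is $d_{-l}=\bigl(\prod_i Q_i\bigr)\,c_l$ with binomial multipliers $Q_i$. Consequently your concluding step (``because each coefficient is divisible by $p$ in $R$, as the monomials $M_k(c)$ are an $R$-basis, one peels off a factor of $p$'') is circular: what you know from $u=pv$ with $v\in\cT_qM$ is divisibility of the coefficients in the $M(c)$-expansion, and divisibility in the $M_k(c)$-expansion is precisely what has to be proved. Likewise, asserting that the reduction mod $p$ of the ambient ring generated by both tori ``behaves compatibly'' with the intersection begs the question, since commutation of reduction with the intersection is the very statement of the Proposition.

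The correct mechanism is the one you name in passing but do not actually deploy: $\cT_qM/p\cT_qM$ is a commutative Laurent polynomial ring, hence a domain, so by Lemma~\ref{lem:div} multiplication by any element that is not divisible by $p$ both preserves and reflects $p$-divisibility. The binomial multipliers above are visibly nonzero modulo $p$ (a sum of two distinct monomials), so from $d_{-l}=\bigl(\prod_i Q_i\bigr)c_l$ one gets that $c_l$ is divisible by $p$ if and only if $d_{-l}$ is, for every $l$; this yields the symmetric divisibility statement of the Proposition, which is then fed into condition (iii) of Lemma~\ref{lem:inj}. With that replacement your outline becomes the paper's proof; without it, the decisive transfer of divisibility between the two bases is unjustified.
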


\begin{proof}
Without loss of generality, we assume $k=1$.
For $1 \le i \le m$ set 
\[
X_i := M(\be_i) 
\text{\;\;\; and \;\;\;}
X_i' := M_1(\be_i).
\]
Following the proof of \cite[Lemma~4.1]{BFZ} (see also \cite[Lemma~5.5]{BZ}),
every element of $\cT_qM$ can be written uniquely as 
\[
y = \sum_{j=-N}^N X_1^j c_j,
\quad\text{with}\quad 
c_j \in \bigoplus_{\br\in\{0\}\times\Z^{m-1}} 
RM(r_2\be_2+\cdots+ r_m\be_m)
\]
and $N$ large enough.
Then $y$ is divisible by $p$ if and only if
each $c_j$ is divisible by $p$.
Note that the direct sum on the right hand side 
is a subring of $\cT_qM$. 
Now, 
\begin{align*}
X_1&= (X'_1)^{-1}\cdot \left(q^{m_+/2}M(\bb_+) + q^{m_-/2}M(\bb_-)\right)
\\
&=
\left(q^{-m_+/2}M(\bb_+)+q^{-m_-/2}M(\bb_-)\right) \cdot (X'_1)^{-1}
\end{align*}
for certain $\bb_+,\bb_-\in\{0\}\times\N^{m-1}$ and 
$m_+=\be_1^t \LL_1 \bb_+$,
$m_-=\be_1^t \LL_1 \bb_-$. 
For brevity, for $k \in \Z$ we set
\[
Q^{km/2} := q^{km_+/2}M(\bb_+) + q^{km_-/2}M(\bb_-).
\]
Note that 
\begin{align*}
(X_1')^{-1}M(\bb_+) &= q^{-m_+}M(\bb_+)(X_1')^{-1},
\\
(X_1')^{-1}M(\bb_-) &= q^{-m_-}M(\bb_-)(X_1')^{-1}.
\end{align*}
This implies
\[
Q^{km/2}(X_1')^{-1} = (X_1')^{-1}Q^{(k+2)m/2}
\]
for all $k \in \Z$,
and therefore
\begin{align*}
X_1^l &= Q^{-m/2} Q^{-3m/2} \cdots Q^{(1-2l)m/2} (X'_1)^{-l}
\\
&= (X'_1)^{-l} Q^{(2l-1)m/2} Q^{(2l-3)m/2} \cdots Q^{m/2}
\end{align*}
and
\begin{align*}
(X_1')^l &= X_1^{-l}Q^{-m/2} Q^{-3m/2} \cdots Q^{(1-2l)m/2} 
\\
&= Q^{(2l-1)m/2} Q^{(2l-3)m/2} \cdots Q^{m/2}X_1^{-l}
\end{align*}
for all $l > 0$.

Now observe that $y \in \cT_qM_1$ if and only if 
there are some
\[
d_l \in \bigoplus_{\br\in\{0\}\times\Z^{m-1}} RM(r_2\be_2+\cdots r_m\be_m)
\]
such that
\[
y = \sum_{l=-N}^N (X_1')^ld_l.
\]
This yields conditions on the pairs $(c_l,d_{-l})$
for each $-N \le l \le N$.

Using the formulas above (relating $X_1^l$ to $(X_1')^{-l}$),
we get
\[
d_{-l} = Q^{(2l-1)m/2}\cdots Q^{3m/2}Q^{m/2}c_l 
\]
for all $l > 0$, 
and $d_0 = c_0$.
In particular, we have
\[
d_{-l} \in \bigoplus_{\br\in\{0\}\times\Z^{m-1}} RM(r_2\be_2+\cdots r_m\be_m)
\]
for all $0 \le l \le N$.

From the above expressions we see with Lemma~\ref{lem:div} that $c_l$ is 
divisible by $p$ if and only if $d_{-l}$ is divisible by $p$.
This implies our claim
since none of the $Q^{k_i m/2}$ is divisible by $p$.
\end{proof}

Recall that for each $(M',\wtB') \in \cS$, the $\K$-algebra
\[
\K_1\otimes_R\cT_qM' 
\]
is a Laurent polynomial ring in $m$ variables.

\begin{Cor} \label{cor:Uq}
Let $j\df \cU_q(\LL,\wtB)\ra T_qM$ be the natural inclusion.
Then the following hold:
\begin{itemize}

\item[(i)]
The map
\[
\K_1 \otimes_R j\df \K_1\otimes_R \cU_q(\LL,\wtB) \ra 
\K_1\otimes_R \cT_qM
\]
is injective.

\item[(ii)]
Identifying $\cA(\wtB)\subseteq \cU(\wtB)$ naturally with
subalgebras of $\K_1\otimes_R\cT_qM$, we have 
$\cA(\wtB) \subseteq \Ima(\K_1\otimes_R j) \subseteq \cU(\wtB)$.

\end{itemize}
\end{Cor}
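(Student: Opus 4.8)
For part (i), the plan is to apply Lemma~\ref{lem:inj} with $F = \cT_qM$ and $U = \cU_q(\LL,\wtB)$, so that $Q = \K_1$ and $p = q^{1/2}-1$. It therefore suffices to verify condition (iii) of that lemma: that for every $u \in \cU_q(\LL,\wtB)$, if $u$ is divisible by $p$ in $\cT_qM$, then $u$ is already divisible by $p$ in $\cU_q(\LL,\wtB)$. The divisor is automatically unique because $\cT_qM$ is a domain (Lemma~\ref{lem:div}, or just the fact that $\cT_qM \subseteq \cF$), so the point is to show that if $u = pz$ with $z \in \cT_qM$, then in fact $z \in \cU_q(\LL,\wtB)$, i.e.\ $z$ lies in every $\cT_qM'$ appearing in the intersection defining $\cU_q$. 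Since $u \in \cU_q$, for each mutated torus $\cT_qM_k$ we have $u \in \cT_qM \cap (p\cT_qM_k)$; by Proposition~\ref{prop:key} this equals $(p\cT_qM) \cap \cT_qM_k$, so $u = p z_k$ with $z_k \in \cT_qM_k$, and uniqueness of the divisor inside $\cF$ forces $z = z_k \in \cT_qM_k$. Running this over all $k = 1,\dots,n$ shows $z$ lies in each of the tori $\cT_qM, \cT_qM_1,\dots,\cT_qM_n$.

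The subtlety I anticipate is that the definition of $\cU_q(\LL,\wtB)$ displayed just before Proposition~\ref{prop:key} involves the intersection $\cT_qM \cap \cT_qM_1 \cap \cdots \cap \cT_qM_n$ over the \emph{initial-cluster-adjacent} tori, and one must make sure this finite intersection is the version of the upper quantum cluster algebra being used (as in \cite[Section~5]{BZ}, by the quantum Laurent phenomenon the larger intersection over all of $\cS$ coincides with it). Granting that, the argument above directly establishes condition (\ref{eq:cond-inj}) for $U = \cU_q(\LL,\wtB) \subseteq F = \cT_qM$, and part (i) follows from Lemma~\ref{lem:inj}.

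For part (ii), I would argue as follows. Applying the isomorphism $\eta_{pR,\cT_qM}$ of Lemma~\ref{lem:isom2} identifies $\K_1 \otimes_R \cT_qM$ with the commutative Laurent polynomial ring $\cT_qM / p\,\cT_qM$, and under this identification $\cA(\wtB)$ and $\cU(\wtB)$ sit inside as the classical cluster algebra and upper cluster algebra of the initial seed, via the surjection $\eta$ of Lemma~\ref{lem:surjhom} together with the classical Laurent phenomenon. The inclusion $\Ima(\K_1 \otimes_R j) \subseteq \cU(\wtB)$: the injective map $\K_1 \otimes_R j$ sends $\K_1 \otimes_R \cU_q$ into $\K_1 \otimes_R \cT_qM$, and since by part (i) (applied at each mutated torus, again using Proposition~\ref{prop:key} and Lemma~\ref{lem:inj}) the maps $\K_1 \otimes_R \cT_qM_k \to \bigl(\K_1 \otimes_R \cT_qM_k\bigr)$ are compatible with the localizations that define $\cU(\wtB)$, the image of $\K_1 \otimes_R j$ lands in $\bigcap_k \bigl(\K_1 \otimes_R \cT_qM_k\bigr) = \cU(\wtB)$. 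The inclusion $\cA(\wtB) \subseteq \Ima(\K_1 \otimes_R j)$: every cluster variable of $\cA(\wtB)$ is, by Lemma~\ref{lem:surjhom}, the image under $\eta$ of $1 \otimes x_q$ for a quantum cluster variable $x_q$, and each such $x_q$ lies in $\cU_q(\LL,\wtB)$ (quantum Laurent phenomenon), so $\cA(\wtB)$, being generated by those cluster variables, is contained in $\Ima(\K_1 \otimes_R j)$.

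The main obstacle I expect is purely bookkeeping: keeping straight the three identifications in play — $\K_1 \otimes_R \cT_qM' \cong \cT_qM'/p\,\cT_qM' \cong \K[x_1^{\pm1},\dots,x_m^{\pm1}]$ — and checking that the natural inclusions among the $\cT_qM'$ after specialization really are the inclusions among classical Laurent rings used to build $\cU(\wtB)$, so that ``$\Ima(\K_1\otimes_R j) \subseteq \bigcap_k (\text{classical torus}_k) = \cU(\wtB)$'' is literally correct rather than merely morally so. No genuinely hard new estimate is needed; Proposition~\ref{prop:key} is the engine and everything else is assembling it with Lemma~\ref{lem:inj}.
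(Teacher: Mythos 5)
Your argument is correct and follows essentially the same route as the paper: part (i) by verifying condition (iii) of Lemma~\ref{lem:inj} for $U=\cU_q(\LL,\wtB)\subseteq F=\cT_qM$ via Proposition~\ref{prop:key} (transferring divisibility by $p$ to each adjacent torus and using that $\cF$ is a domain), and part (ii) by the quantum Laurent phenomenon plus the fact that quantum cluster variables specialize to classical ones under the identification $\K_1\otimes_R\cT_qM\cong \K[X_1^{\pm1},\dots,X_m^{\pm1}]$. The only blemish is cosmetic: in applying Proposition~\ref{prop:key} you state $u\in \cT_qM\cap(p\,\cT_qM_k)$ before deriving it, whereas the given data is $u\in(p\,\cT_qM)\cap\cT_qM_k$; since the proposition asserts equality of the two sets, the conclusion $u=pz_k$ with $z_k\in\cT_qM_k$ stands.
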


\begin{proof} With the help of Proposition~\ref{prop:key} we can show inductively
that each element of $\cU_q(\LL,\wtB)$ fulfills the condition of 
Lemma~\ref{lem:inj}(iii). This shows (i).

For (ii), we observe first that $\cU_q(\LL,\wtB)$ contains all
quantum cluster variables, see \cite[Corollary~5.2]{BZ}.
By \cite[Section~4]{BZ},
quantum cluster variables specialize to
classical cluster variables.
(For a quantum cluster variable $x_q$ let $x := 1 \otimes x_q \in
\K_1 \otimes_R \cT_qM$ be the \emph{specialization} of $x_q$.
One can see $\cA(\wtB)$ as a subalgebra  of $\K_1 \otimes_R \cT_qM$.
It follows from the quantum exchange relations and the
classical exchange relations that $x$ is a cluster variable in 
$\cA(\wtB)$.)
This yields the first inclusion. 
The second inclusion follows directly from the definitions and the
identification of $\K_1 \otimes_R \cT_qM$ with the Laurent
polynomial ring $\cT M$ arising from the initial cluster of $\cA(\wtB)$.
\end{proof}

The following corollary is worth noting, but is not used later on.
We stress that here $\cA_q(\LL,\wtB)$ and $\cU_q(\LL,\wtB)$ are defined
over the Laurent polynomial ring $R$, and not (as for example in
\cite{GY}) over a field (like the field of fractions of $R$).

\begin{Cor}
Suppose that $\cA_q(\LL,\wtB) = \cU_q(\LL,\wtB)$.
Then 
\[
\K_1 \otimes_R \cA_q(\LL,\wtB) \cong \cA(\wtB).
\]
\end{Cor}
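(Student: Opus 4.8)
The plan is to combine Lemma~\ref{lem:surjhom} with the injectivity result of Corollary~\ref{cor:Uq}. By Lemma~\ref{lem:surjhom} we already have a surjective $\K$-algebra homomorphism $\eta\df \K_1\otimes_R\cA_q(\LL,\wtB)\to\cA(\wtB)$, so it remains only to prove injectivity. The key point is to identify the source with a concrete subspace of the specialized quantum torus. Since we are assuming $\cA_q(\LL,\wtB)=\cU_q(\LL,\wtB)$, Corollary~\ref{cor:Uq}(i) tells us that the natural map
\[
\K_1\otimes_R j\df \K_1\otimes_R\cA_q(\LL,\wtB)\ra \K_1\otimes_R\cT_qM
\]
is injective, so $\K_1\otimes_R\cA_q(\LL,\wtB)$ may be regarded as a subalgebra of the Laurent polynomial ring $\K_1\otimes_R\cT_qM\cong \cT M$.

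Next I would check that under this identification the image of $\K_1\otimes_R\cA_q$ is exactly $\cA(\wtB)$, and that $\eta$ is precisely the resulting inclusion. On one hand, Corollary~\ref{cor:Uq}(ii) gives
\[
\cA(\wtB)\subseteq \Ima(\K_1\otimes_R j)\subseteq \cU(\wtB).
\]
On the other hand, $\eta$ sends $1\otimes x_q$ for a quantum cluster variable $x_q$ to the corresponding classical cluster variable, and the quantum cluster variables generate $\cA_q=\cU_q$ as an $R$-algebra; hence $\Ima(\eta)=\cA(\wtB)$, and $\Ima(K_1\otimes_R j)$ coincides with $\cA(\wtB)$ as a subalgebra of $\cT M$. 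Chasing the definitions, $\eta$ is identified with the corestriction of $\K_1\otimes_R j$ to its image $\cA(\wtB)$; being the restriction of an injective map it is injective, and being surjective onto $\cA(\wtB)$ it is an isomorphism.

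The step I expect to require the most care is the bookkeeping in the second paragraph: verifying that the two a priori different maps out of $\K_1\otimes_R\cA_q(\LL,\wtB)$ — namely $\eta$ from Lemma~\ref{lem:surjhom} (built via $\eta_{pR,T}$ and $T/pT$) and the composition of $\K_1\otimes_R j$ with the identification $\K_1\otimes_R\cT_qM\cong\cT M$ — are genuinely the same map, not merely maps with the same image. This is essentially the compatibility of the isomorphisms $\K_1\otimes_R T\cong T/pT$ of Lemma~\ref{lem:isom2} with the specialization of quantum cluster variables to classical ones recorded in Corollary~\ref{cor:Uq}(ii), so it is routine but needs to be spelled out so that the injectivity of $\K_1\otimes_R j$ transfers to $\eta$. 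Once that identification is in place, the conclusion $\K_1\otimes_R\cA_q(\LL,\wtB)\cong\cA(\wtB)$ is immediate.
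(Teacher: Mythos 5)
Your proposal is correct and is essentially the paper's own argument: the paper's proof is the one-line instruction to combine Corollary~\ref{cor:Uq}(i) (injectivity after specialization, using $\cA_q=\cU_q$) with Lemma~\ref{lem:surjhom} (surjectivity onto $\cA(\wtB)$), which is exactly what you do, with the identification of $\eta$ and $\K_1\otimes_R j$ spelled out more explicitly.
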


\begin{proof}
Combine Corollary~\ref{cor:Uq}(i) with Lemma~\ref{lem:surjhom}.
\end{proof}


\section{Graded cluster algebras}\label{sec4}


Let $\cA$ be a cluster algebra $\cA(\wtB)$ or a quantum cluster algebra
$\cA(\LL,\wtB)$.
Then $\cA$ is a $\Z$-\emph{graded cluster algebra} or
$\Z$-\emph{graded quantum cluster algebra}, respectively,
if the following hold:
\begin{itemize}

\item[(i)]
There is a direct sum decomposition
\[
\cA = \bigoplus_{g \in \Z} \cA_g
\]
such that $\cA_g \cdot \cA_h \subseteq \cA_{g+h}$
for all $g,h \in \Z$;

\item[(ii)]
All cluster variables (resp. quantum cluster variables)
are homogeneous, i.e. for each cluster variable (resp. quantum cluster variable)
$x \in \cA$ there is some $g$ with $x \in \cA_g$.

\end{itemize}

We have $\wtB = (b_{ij}) \in M_{m,m-n}(\Z)$.
Assume that there is some $d = (d_1,\ldots,d_m) \in \Z^n$ 
such that for each $1 \le k \le m-n$ we have
\begin{equation}\label{eq:2}
\sum_{b_{ik} > 0} d_ib_{ik} = \sum_{b_{ik} < 0} d_ib_{ik}.
\end{equation}
Let $X = (X_1,\ldots,X_m)$ be the initial cluster of $\cA$.
Then
\[
\deg_d(X_i) := d_i
\]
extends to all cluster monomials and turns $\cA$ into a $\Z$-graded
cluster algebra with 
\[
 \cA_g := {\rm Span}_{R}(x \mid x\text{ is a product of cluster variables with }
  \deg_d(x) = g \}.
\] 
For the proof one uses the inductive construction of 
cluster variables starting with the initial seed $(M,\wtB)$.
(The initial cluster is then $X = (X_1,\ldots,X_m)$ with 
$X_i = M(\be_i)$ for all $1 \le i \le m$.)
Equation~(\ref{eq:2}) ensures that all exchange relations are
homogeneous. 
We refer to \cite{GL} for more details.

\begin{Lem}\label{lem:graded1}
Suppose $\cA(\wtB)$ is a $\Z$-graded cluster algebra.
Then the grading on $\cA(\wtB)$ induces gradings on
the quantum cluster algebra $\cA_q(\LL,\wtB)$, 
the upper cluster algebra $\cU(\wtB)$, and the upper quantum cluster algebra
$\cU_q(\LL,\wtB)$.
Moreover,
the homogeneous components of 
$\cU_q(\LL,\wtB)$ are free $R$-modules.
\end{Lem}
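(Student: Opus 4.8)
The plan is to transport the $\Z$-grading on $\cA(\wtB)$ through the chain of inclusions and intersections used to define the various (quantum) cluster algebras, and then to read off freeness of the homogeneous components of $\cU_q(\LL,\wtB)$ from the fact that it sits inside the based quantum torus $\cT_qM$ as a graded $R$-submodule.

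First I would fix the grading data: by hypothesis there is a vector $d = (d_1,\ldots,d_m)$ with $\deg_d(X_i)=d_i$, and by \eqref{eq:2} all (quantum) exchange relations are homogeneous. I would use this to grade the initial based quantum torus $\cT_qM$ by setting $\deg_d(M(\be_i)) := d_i$; since $M(\be)M(\be') = q^{*}M(\be+\be')$ and $q^{\pm1/2}$ has degree $0$, this is multiplicative and makes $\cT_qM = \bigoplus_{g\in\Z}(\cT_qM)_g$ with each $(\cT_qM)_g$ a \emph{free} $R$-module on the monomials $M(\be)$ of $d$-degree $g$. The same prescription grades every $\cT_qM'$ for $(M',\wtB')\in\cS$, because the mutation formulas of \cite[Proposition~4.9]{BZ} are homogeneous; so each $\K_1\otimes_R\cT_qM'$ is a graded Laurent polynomial ring, and $\cA_q(\LL,\wtB)$, being generated by the (homogeneous) quantum cluster variables, is a graded $R$-subalgebra of $\cF$. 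Likewise $\cA(\wtB)\subseteq\cU(\wtB)$ are graded.

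Next, for the upper quantum cluster algebra I would observe that $\cU_q(\LL,\wtB) = \bigcap_{(M',\wtB')\in\cS}\cT_qM'$ is an intersection of graded $R$-submodules of $\cF$ (the grading being the one just described, which is well defined on $\cF$ via the uniquely-determined $d$-degrees of the generators $M'(\be_i)$), hence is itself a graded $R$-submodule of $\cT_qM$: an element lies in the intersection if and only if each of its homogeneous components does. This gives the first assertions, that the grading on $\cA(\wtB)$ induces gradings on $\cA_q(\LL,\wtB)$, $\cU(\wtB)$ and $\cU_q(\LL,\wtB)$.

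Finally, for freeness of the homogeneous components: $(\cU_q(\LL,\wtB))_g = \cU_q(\LL,\wtB)\cap(\cT_qM)_g$ is an $R$-submodule of the free $R$-module $(\cT_qM)_g$, which has finite rank precisely when the set of $\be\in\Z^m$ with $\deg_d(M(\be))=g$ is finite; in general it may have infinite rank, but in all cases $(\cT_qM)_g$ is free over the principal ideal domain $R$. A submodule of a free module over a PID is free, so $(\cU_q(\LL,\wtB))_g$ is free over $R$. I expect the only genuinely delicate point to be checking that the $d$-grading really is well defined on the ambient skew field $\cF$ in a way compatible with \emph{all} clusters simultaneously — i.e.\ that the $d$-degrees assigned via different seeds agree on overlaps — which follows from the homogeneity of the mutation formulas but should be spelled out; granting that (and the PID structure of $R$, already noted in the introduction), the rest is bookkeeping. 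Note that finite-dimensionality of the homogeneous \emph{components} as $\K$-vector spaces is not claimed here and is not needed; only freeness over $R$ is asserted.

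\medskip

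\begin{proof}
We use the degree vector $d=(d_1,\ldots,d_m)$ of hypothesis, so that $\deg_d(X_i)=d_i$ and, by~\eqref{eq:2}, all quantum exchange relations of $\cA_q(\LL,\wtB)$ and all exchange relations of $\cA(\wtB)$ are homogeneous.

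We first grade the based quantum torus $\cT_qM$. Declaring $\deg_d(M(\be_i)):=d_i$ and extending multiplicatively, we set $\deg_d(M(\be)):=\sum_{i}a_id_i$ for $\be=(a_1,\ldots,a_m)$. Since $M(\be)M(\be')=q^{\lambda(\be,\be')/2}M(\be+\be')$ for a suitable integer $\lambda(\be,\be')$, and $\deg_d(q^{\pm 1/2})=0$, this makes
\[
\cT_qM=\bigoplus_{g\in\Z}(\cT_qM)_g,\qquad
(\cT_qM)_g={\rm Span}_R\{\,M(\be)\mid \deg_d(M(\be))=g\,\},
\]
a $\Z$-graded $R$-algebra, and each $(\cT_qM)_g$ is a \emph{free} $R$-module (with $R$-basis the relevant monomials $M(\be)$). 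The same recipe grades $\cT_qM'$ for every $(M',\wtB')\in\cS$: by \cite[Proposition~4.9]{BZ} the mutated generators $M'(\be_i)$ are homogeneous, so $\deg_d$ is unambiguously defined on $M'(\be_i)$ and extends multiplicatively; compatibility of these degrees across different seeds (hence a well-defined $\Z$-grading on the ambient skew field $\cF$ restricting to each $\cT_qM'$) follows by induction on the length of a mutation sequence, using homogeneity of the mutation formulas.

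Consequently $\cA_q(\LL,\wtB)$, being the $R$-subalgebra of $\cF$ generated by the quantum cluster variables, each of which is homogeneous, is a $\Z$-graded $R$-subalgebra of $\cT_qM$; similarly the grading on $\cA(\wtB)$ restricts from, and extends to, $\cU(\wtB)$. For the upper quantum cluster algebra,
\[
\cU_q(\LL,\wtB)=\bigcap_{(M',\wtB')\in\cS}\cT_qM'
\]
is an intersection of graded $R$-submodules of $\cF$: an element of $\cF$ lies in a given $\cT_qM'$ if and only if each of its homogeneous components does (the $\deg_d$-homogeneous decomposition of $\cF$ restricts to $\cT_qM'$). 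Hence $\cU_q(\LL,\wtB)$ is a graded $R$-submodule of $\cT_qM$, and in particular inherits a $\Z$-grading.

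It remains to prove freeness of $(\cU_q(\LL,\wtB))_g$. We have
\[
(\cU_q(\LL,\wtB))_g=\cU_q(\LL,\wtB)\cap(\cT_qM)_g,
\]
which is an $R$-submodule of the free $R$-module $(\cT_qM)_g$. Since $R=\K[q^{\pm 1/2}]$ is a principal ideal domain, every submodule of a free $R$-module is free. Therefore $(\cU_q(\LL,\wtB))_g$ is a free $R$-module, as claimed.
\end{proof}
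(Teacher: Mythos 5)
Your proposal is correct and follows essentially the same route as the paper: grade each quantum torus $\cT_qM'$ via the degree vector $d$ and the homogeneity of the (quantum) exchange relations, deduce that $\cU_q(\LL,\wtB)=\bigcap_{(M',\wtB')\in\cS}\cT_qM'$ is graded because the seed-wise homogeneous decompositions of any element agree, and obtain freeness of each homogeneous component as a submodule of the free $R$-module $(\cT_qM)_g$ over the principal ideal domain $R$. The only caveat is your phrase ``homogeneous decomposition of $\cF$'': the skew field $\cF$ is not literally $\Z$-graded, so this point should be formulated, as in the paper, as the statement that for an element lying in two quantum tori the homogeneous components computed in either torus coincide (which is exactly the compatibility you justify by induction along mutation sequences, e.g.\ by realizing the grading as an eigenspace decomposition that extends to $\cF$).
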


\begin{proof}
For a $\Z$-graded cluster algebra $\cA(\wtB)$, a corresponding
$\Z$-grading of $\cA_q(\LL,\wtB)$ is constructed analogously,
see \cite{GL}.
As before, 
let $(\LL,M)$ be the initial seed of $\cA_q(\LL,\wtB)$ with
initial cluster $X = (X_1,\ldots,X_m)$ where $X_i := M(\be_i)$ for
$1 \le i \le m$.

By definition, we have
\[
\cU_q(\LL,\wtB) = \bigcap_{(\LL',M') \in \cS} \cT_qM.
\]
The $\Z$-grading on $\cA_q(\LL,\wtB)$ induces a
$\Z$-grading on each quantum torus $\cT_qM'$.
Let
$$
\cT_qM' = \bigoplus_{g \in \Z} (\cT_qM')_g
$$
where the $(\cT_qM')_g$ are the homogeneous components.
Clearly, all $(\cT_qM')_g$ are free $R$-modules (of infinite rank).

Set
\[
U_q(\LL,\wtB)_g := (\cT_qM)_g \cap U_q(\LL,\wtB).
\]
Note that as a submodule of a free $R$-module, $U_q(\LL,\wtB)_g$
is also a free $R$-module.

We claim that
\[
\cU_q(\LL,\wtB) = \bigoplus_{g \in \Z} U_q(\LL,\wtB)_g.
\]
Let $x \in U_q(\LL,\wtB)$.
For each quantum seed $(\LL',M') \in \cS$, it follows that
\[
x = \sum_{g \in \Z} t_{M',g}
\]
for uniquely determined $t_{M',g} \in (\cT_qM')_g$.
Keeping in mind that exchange relations are homogeneous, we get that
$t_{M',g} = t_{M,g}$ for all $(\LL',M') \in \cS$.
This proves our claim.
\end{proof}


\section{Proof of the main result}\label{sec5}


\begin{Thm} \label{thm:main}
Let $(\LL,\wtB)$ be a compatible pair such that 
the following hold:
\begin{itemize}

\item[(i)] 
$\cA(\wtB) = \cU(\wtB)$;

\item[(ii)] 
$\cA(\wtB)$ is a $\Z$-graded cluster algebra with finite-dimensional
homogeneous components.

\end{itemize}
For the initial quantum seed
$(M,\wtB)$ of $\cA_q(\LL,\wtB)$ 
let
\[
i\df\cA_q(\LL,\wtB) \ra \cT_qM
\] 
be the inclusion map.
Then $\K_1\otimes_Ri$ induces an 
isomorphism 
\[
\K_1\otimes_R\cA_q(\LL,\wtB)\cong\cA(\wtB).
\] 
\end{Thm}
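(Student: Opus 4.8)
The plan is to combine the three main tools already established: the surjection $\eta\colon\K_1\otimes_R\cA_q(\LL,\wtB)\to\cA(\wtB)$ from Lemma~\ref{lem:surjhom}, the injectivity and sandwiching statement from Corollary~\ref{cor:Uq}, and the graded structure from Lemma~\ref{lem:graded1}. The key point is that assumption (i) forces $\cA(\wtB)=\cU(\wtB)$, so the sandwich $\cA(\wtB)\subseteq\Ima(\K_1\otimes_R j)\subseteq\cU(\wtB)$ of Corollary~\ref{cor:Uq}(ii) collapses to an \emph{equality}: the image of $\K_1\otimes_R\cU_q(\LL,\wtB)$ in $\K_1\otimes_R\cT_qM$ is exactly $\cA(\wtB)$. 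Since $\cA_q(\LL,\wtB)\subseteq\cU_q(\LL,\wtB)$, the image of $\K_1\otimes_R\cA_q(\LL,\wtB)$ in $\K_1\otimes_R\cT_qM$ is contained in $\cA(\wtB)$; and because it contains all specialized quantum cluster variables, which by Lemma~\ref{lem:surjhom} generate $\cA(\wtB)$, the image is precisely $\cA(\wtB)$. So the composite $\K_1\otimes_R\cA_q(\LL,\wtB)\xrightarrow{\K_1\otimes_Ri}\K_1\otimes_R\cT_qM$ has image $\cA(\wtB)$; it remains to prove this map is injective, for then it factors as the stated isomorphism (and one checks it coincides with $\eta$).

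For injectivity, the plan is to apply Lemma~\ref{lem:inj} to the free $R$-module $F:=\cT_qM$ and the submodule $U:=\cA_q(\LL,\wtB)$. By part (iii) of that lemma, it suffices to show: for every $x\in\cA_q(\LL,\wtB)$, if $x=py$ with $y\in\cT_qM$, then already $y\in\cA_q(\LL,\wtB)$. First I would reduce to homogeneous $x$: by Lemma~\ref{lem:graded1} the quantum cluster algebra $\cA_q(\LL,\wtB)$ and the quantum torus $\cT_qM$ are $\Z$-graded with $\K_1\otimes_R\cT_qM$ (a Laurent polynomial ring) graded compatibly; since $p$ is homogeneous of degree $0$, divisibility by $p$ can be checked degree by degree, so one may assume $x$ and hence $y$ lie in a single homogeneous component. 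Now Corollary~\ref{cor:Uq}(i) already tells us $\cU_q(\LL,\wtB)\hookrightarrow\cT_qM$ satisfies condition~(iii) of Lemma~\ref{lem:inj}; applying it to $x\in\cA_q(\LL,\wtB)\subseteq\cU_q(\LL,\wtB)$ with $x=py$, $y\in\cT_qM\supseteq\cU_q(\LL,\wtB)$ — wait, we need $y\in\cU_q$. This is where the hypotheses enter: $y$ is the specialization-preimage of an element of $\cA(\wtB)=\cU(\wtB)$, so I would argue $y$ lies in $\cU_q(\LL,\wtB)=\bigcap_{(M',\wtB')\in\cS}\cT_qM'$ by checking, for each seed $(M',\wtB')$, that $x\in\cT_qM'$ together with $x=py$ forces $y\in\cT_qM'$ — precisely the content of Lemma~\ref{lem:inj}(iii) for the inclusion $\cT_qM'\cap\cT_qM\hookrightarrow\cT_qM$, combined with Proposition~\ref{prop:key} run along a mutation path from $M$ to $M'$. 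Once $y\in\cU_q(\LL,\wtB)$, the finite-dimensionality of homogeneous components plus $\cA(\wtB)=\cU(\wtB)$ pins $y$ down: its specialization lies in $\cA(\wtB)$ by the already-established equality of images, and I would then identify $y$ itself as a quantum Laurent expression whose specialization determines it uniquely within the relevant finite-dimensional homogeneous piece, forcing $y\in\cA_q(\LL,\wtB)$.

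The main obstacle I anticipate is exactly that last identification: promoting "$y$ specializes into $\cA(\wtB)$" to "$y\in\cA_q(\LL,\wtB)$". The injectivity statement of Corollary~\ref{cor:Uq}(i) means the specialization map is injective \emph{on} $\cU_q(\LL,\wtB)$, so $y$ is determined by its image; but to conclude $y\in\cA_q(\LL,\wtB)$ rather than merely $y\in\cU_q(\LL,\wtB)$, one must know that $\cA_q(\LL,\wtB)$ and $\cU_q(\LL,\wtB)$ have the \emph{same} specialization image — call it $\cA(\wtB)=\cU(\wtB)$ — and that on each finite-dimensional homogeneous component $\cA(\wtB)_g$ the specialization from $(\cU_q)_g$ is a bijection onto $(\cA_q)_g$-image, with $(\cA_q)_g$ mapping onto $\cA(\wtB)_g$. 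Finite-dimensionality of $\cA(\wtB)_g$ is what makes this a rank count: $(\cA_q)_g$ and $(\cU_q)_g$ are free $R$-modules of finite rank (Lemma~\ref{lem:graded1}), $(\cA_q)_g\subseteq(\cU_q)_g$, and both surject onto the same $\K$-vector space $\cA(\wtB)_g$ after $\otimes\K_1$; since the inclusion induces an injection on specializations (Corollary~\ref{cor:Uq}(i) restricted to $(\cA_q)_g$) whose image equals that of $(\cU_q)_g$, the ranks agree, forcing $(\cA_q)_g=(\cU_q)_g$. Hence $y\in(\cU_q)_g=(\cA_q)_g\subseteq\cA_q(\LL,\wtB)$, which is what we needed; the careful bookkeeping of these finite-rank free modules and the verification that the specialization images really coincide is the delicate part, but no new idea beyond the stated lemmas is required.
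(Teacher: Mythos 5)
Your overall strategy is the same as the paper's: use Lemma~\ref{lem:surjhom} for surjectivity, Lemma~\ref{lem:graded1} and hypothesis (ii) to get finite-rank graded pieces, Corollary~\ref{cor:Uq} plus hypothesis (i) to control $\cU_q(\LL,\wtB)$, and then prove injectivity via the divisibility criterion of Lemma~\ref{lem:inj}(iii). The reduction to homogeneous elements and the step showing $y\in\cU_q(\LL,\wtB)$ are fine (though your appeal to ``Proposition~\ref{prop:key} run along a mutation path'' is unnecessary: $\cU_q$ is by definition the intersection over the initial seed and its $n$ one-step mutations, so Corollary~\ref{cor:Uq}(i) together with torsion-freeness of $\cT_qM$ already gives $y\in\cU_q$).

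The genuine gap is in your last step. First, the parenthetical ``Corollary~\ref{cor:Uq}(i) restricted to $(\cA_q)_g$'' is circular: injectivity of $\K_1\otimes_R(\cU_q\hookrightarrow\cT_qM)$ says nothing about injectivity of $\K_1\otimes_R(\cA_q\hookrightarrow\cT_qM)$, since $\K_1\otimes_R(\cA_q\to\cU_q)$ being injective is exactly what you are trying to prove. Second, and decisively, the inference ``the ranks agree, forcing $(\cA_q)_g=(\cU_q)_g$'' is false: over the PID $R$ a free submodule of the same finite rank need not be the whole module (e.g.\ $pR\subsetneq R$, or $(q^{1/2}+1)R\subsetneq R$, which even has trivial cokernel after $\otimes_R\K_1$). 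Indeed, the equality $(\cA_q)_g=(\cU_q)_g$ for all $g$ would assert $\cA_q(\LL,\wtB)=\cU_q(\LL,\wtB)$, which the paper explicitly leaves open (Question~5.3), so it cannot be extracted from a rank count. The correct conclusion from your data is weaker but sufficient: set $M_g:=(\cU_q)_g/(\cA_q)_g$, a finitely generated $R$-module since $(\cU_q)_g$ is free of finite rank. Because the specialization images of $(\cA_q)_g$ and $(\cU_q)_g$ both equal $\cA(\wtB)_g$, the map $\K_1\otimes_R(\cA_q)_g\to\K_1\otimes_R(\cU_q)_g$ is surjective, i.e.\ $M_g=pM_g$; by the structure theorem $M_g$ then has no free summand and no $p$-primary summand, so it has finite length and multiplication by $p$ on $M_g$ is bijective, i.e.\ $\Tor_1^R(\K_1,M_g)=0$. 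Hence if $x\in(\cA_q)_g$ and $x=py$ with $y\in(\cU_q)_g$, then $p\bar y=0$ in $M_g$ forces $\bar y=0$, i.e.\ $y\in(\cA_q)_g$, which is the divisibility statement you need (this is exactly the paper's argument, phrased via Lemma~\ref{lem:inj}). With that replacement your proof closes; as written, the final identification does not.
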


\begin{proof}
By Lemma~\ref{lem:surjhom} we have
$\Ima(\K_1\otimes_Ri) = \cA(\wtB)$.
Here we identify $\cA(\wtB)$ again with a subalgebra of
\[
\K_1 \otimes_R \cT_qM \cong \cT M = \K[X_1^\pm,\ldots,X_m^\pm].
\]

By assumption (ii) and Lemma~\ref{lem:graded1},
the upper quantum cluster algebra $\cU_q(\LL,\wtB)$ is a $\Z$-graded 
algebra with all
homogenous components being free $R$-modules.

By the hypothesis (i) and Corollary~\ref{cor:Uq}, the inclusion
\[
j\df \cU_q(\LL,\wtB) \ra \cT_qM
\] 
induces an isomorphism 
\[
\K_1\otimes_R \cU_q(\LL,\wtB) \cong \cA(\wtB) = \cU(\wtB).
\] 
In particular, we have
\[
\rk_R(\cU(\LL,\wtB)_g) = \dim_\K(A(\wtB)_g) < \infty
\] 
for all $g\in\Z$.

Now, consider the inclusion 
\[
i'\df \cA_q(\LL,\wtB) \ra \cU_q(\LL,\wtB).
\]
Since $i = j \circ i'$, by the above remark and Lemma~\ref{lem:surjhom} we have that 
$\K_1 \otimes_R i'$ is surjective. 
Thus if we consider the $R$-module
\[
M := \cU_q(\LL,\wtB)/\cA_q(\LL,\wtB),
\]
we must have $\K_1\otimes_R M = 0$, or in other words $M=pM$,
compare Lemma~\ref{lem:isom1}. 

For each $g \in \Z$ there exists a short exact sequence
\[
0 \to \cA_q(\LL,\wtB)_g \xrightarrow{i'} \cU_q(\LL,\wtB)_g \to M_g \to 0
\]
of finitely generated $R$-modules.
(We know that $\cA_q(\LL,\wtB)_g$ and $\cU_q(\LL,\wtB)_g$
are free $R$-modules of finite rank, in particular both are finitely
generated. 
As a factor module of a finitely generated module, $M_g$ is finitely
generated as well.)
Since $M = pM$ and therefore also $M_g = pM_g$, it follows that $M_g$ does not
have any direct summand isomorphic to $R$  or to $R/(p)$.
This implies that $M_g$ is an $R$-module of finite length.
Therefore the surjective $R$-module homomorphism $M_g \to M_g$
defined by $x \mapsto px$ has to be injective as well.
We conclude that
\[
\Tor_1^R(\K_1,M) = \{ x \in M \mid px=0 \} = 0.
\]
Now Lemma~\ref{lem:inj} implies that
$\K_1\otimes_R i'$ is also injective and therefore an isomorphism.
\end{proof}

Note that Theorem~\ref{thm:main} implies immediately Theorem~\ref{thm:mainresult}.

Let $\Quot(R) := \K(q^{1/2})$ be the field of fractions of
$R = \K[q^{1/2}]$.

\begin{Cor}
With the assumptions of Theorem~\ref{thm:main} and the
notation used in its proof,  
\[
\Quot(R) \otimes i'\df 
\Quot(R) \otimes_R \cA_q(\LL,\wtB) \ra \Quot(R) \otimes_R
\cU_q(\LL,\wtB)
\]
is an isomorphism. 
\end{Cor}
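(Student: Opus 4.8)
The plan is to deduce the corollary directly from what is already established in the proof of Theorem~\ref{thm:main}, by passing from the specialization at $p = q^{1/2}-1$ to localization at the multiplicative set $R \setminus \{0\}$. Concretely, write $M := \cU_q(\LL,\wtB)/\cA_q(\LL,\wtB)$ as in that proof. We showed there that each homogeneous component $M_g$ is a finitely generated $R$-module with $M_g = pM_g$, and concluded that $M_g$ has finite length over $R$. The key point is that a finite-length module over the principal ideal domain $R$ is in particular torsion, so $\Quot(R) \otimes_R M_g = 0$ for every $g \in \Z$, and hence $\Quot(R) \otimes_R M = 0$.

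First I would record that $\Quot(R) \otimes_R (-)$ is exact (localization is flat), so applying it to the short exact sequence
\[
0 \to \cA_q(\LL,\wtB) \xrightarrow{i'} \cU_q(\LL,\wtB) \to M \to 0
\]
yields an exact sequence
\[
0 \to \Quot(R) \otimes_R \cA_q(\LL,\wtB) \xrightarrow{\Quot(R)\otimes i'} \Quot(R) \otimes_R \cU_q(\LL,\wtB) \to \Quot(R) \otimes_R M \to 0.
\]
Then I would invoke the vanishing $\Quot(R)\otimes_R M = 0$ to conclude that $\Quot(R)\otimes i'$ is an isomorphism. Strictly speaking one should note that $M = \bigoplus_g M_g$ so that $\Quot(R)\otimes_R M = \bigoplus_g (\Quot(R)\otimes_R M_g) = 0$; this is where assumption (ii) of Theorem~\ref{thm:main} (finite-dimensional homogeneous components, hence finite-rank free components for $\cA_q$ and $\cU_q$) is used, just as in the theorem's proof.

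There is essentially no obstacle here: the corollary is a soft consequence, and the only thing to be careful about is that we genuinely have the finite-length conclusion for each $M_g$ available from the proof of Theorem~\ref{thm:main} — which we do, since it was derived there before the final $\Tor$ computation. One could even shortcut further and observe that Theorem~\ref{thm:main} already gives $M = pM$ with $M_g$ finitely generated, and that $pM_g = M_g$ for a finitely generated module over a domain forces $M_g$ to be annihilated by some element of the form $1 - pr$ (Nakayama), which is a nonzero element of $R$; hence $M_g$ is torsion and dies after tensoring with $\Quot(R)$. Either route gives the claim in a couple of lines.
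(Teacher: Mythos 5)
Your argument is correct and is essentially the paper's own proof: both reduce the claim, via flatness of $\Quot(R)$, to the vanishing of $\Quot(R)\otimes_R M_g$ for all $g$, which follows because each $M_g$ was shown in the proof of Theorem~\ref{thm:main} to have finite length over $R$, hence to be torsion.
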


\begin{proof}
In the proof of Theorem~\ref{thm:main} we 
looked at the short exact sequence
\[
0 \to \cA_q(\LL,\wtB)_g \xrightarrow{i'} \cU_q(\LL,\wtB)_g \to M_g \to 0
\]
of $R$-modules for each $g \in \Z$ and saw that
$M_g$ is of finite length.
It follows that the free $R$-modules 
$\cA_q(\LL,\wtB)_g$ and  $\cU_q(\LL,\wtB)_g$ have the same rank, i.e.
\[
\rk_R(\cA_q(\LL,\wtB)_g) = \rk_R(\cU_q(\LL,\wtB)_g).
\]
Since $\Quot(R)$ is a flat $R$-module, 
our claim is equivalent
to $\Quot(R) \otimes_R M_g = 0$ for all $g$.
But this holds, since $M_g$ is of finite length.
\end{proof}

\begin{Qu}
Is it true that
$\cA(\wtB) = \cU(\wtB)$ if and only if
$\cA_q(\LL,\wtB) = \cU_q(\LL,\wtB)$?
(None of the two implications seems to be obvious.)
\end{Qu}

\begin{Conj} 
Hypothesis (ii) in Theorem~\ref{thm:main} is not needed.
\end{Conj}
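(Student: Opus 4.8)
The conjecture asks whether hypothesis~(ii) can be dropped from Theorem~\ref{thm:main}, and the first thing to do is to pin down exactly what that hypothesis contributes. Under hypothesis~(i) alone, Lemma~\ref{lem:surjhom} and Corollary~\ref{cor:Uq} already give that $\K_1\otimes_R i'$ is surjective, where $i'\df\cA_q(\LL,\wtB)\to\cU_q(\LL,\wtB)$ is the inclusion, that $\K_1\otimes_R\cU_q(\LL,\wtB)\cong\cU(\wtB)=\cA(\wtB)$, and hence that the $R$-module $M:=\cU_q(\LL,\wtB)/\cA_q(\LL,\wtB)$ satisfies $M=pM$. Since $\cU_q(\LL,\wtB)$ is a free $R$-module, the long exact $\Tor$-sequence shows that $\Ker(\K_1\otimes_R i')=\Tor_1^R(\K_1,M)$; so, via Lemma~\ref{lem:inj}, the conjecture is \emph{equivalent} to the assertion that $M$ is $p$-torsion free, i.e. that $pu\in\cA_q(\LL,\wtB)$ together with $u\in\cU_q(\LL,\wtB)$ forces $u\in\cA_q(\LL,\wtB)$. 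The whole content of the conjecture is this single $p$-saturation property.

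This also makes the role of~(ii) transparent. For a general $R$-module $M$ the relation $M=pM$ does \emph{not} imply $\Tor_1^R(\K_1,M)=0$ — $M=R_{(p)}/R$ is a counterexample. In the proof of Theorem~\ref{thm:main} the $\Z$-grading is used precisely to split $M=\bigoplus_g M_g$, with each $M_g$ a quotient of the \emph{finitely generated} free $R$-module $\cU_q(\LL,\wtB)_g$; for such $M_g$ the implication $M_g=pM_g\Rightarrow\Tor_1^R(\K_1,M_g)=0$ is valid, because over the principal ideal domain $R$ a finitely generated module with $M_g=pM_g$ is a finite sum of cyclic torsion modules with annihilators coprime to $p$. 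Without~(ii) we lose this direct-sum decomposition of $M$ into finitely generated pieces, and a mere exhausting filtration of $\cU_q(\LL,\wtB)$ by finitely generated $R$-submodules will not do, since the equation $M=pM$ is not inherited by the filtration steps.

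I can see three lines of attack. First, one could try to replace the $\Z$-grading by a finer grading or multigrading of $\cU_q(\LL,\wtB)$ and $\cA_q(\LL,\wtB)$ — for instance one indexed by the degree data of Laurent expansions in the initial quantum torus $\cT_qM$ — whose homogeneous components remain finitely generated over $R$ even when no $\Z$-grading with finite-dimensional components exists, and then rerun the argument of Theorem~\ref{thm:main} componentwise. Second, one could go for the stronger statement that $\cA(\wtB)=\cU(\wtB)$ already forces $\cA_q(\LL,\wtB)=\cU_q(\LL,\wtB)$ — one direction of the Question — whence the conjecture follows from the last Corollary of Section~\ref{sec3}; the attempt would be: given $u\in\cU_q(\LL,\wtB)$, lift a presentation of its specialization $\bar u\in\cU(\wtB)=\cA(\wtB)$ in classical cluster variables to the corresponding expression in quantum cluster variables (fixing orderings and $q$-powers so that it specializes back to $\bar u$), obtaining $u'\in\cA_q(\LL,\wtB)$ with $u-u'\in(p\,\cT_qM)\cap\cU_q(\LL,\wtB)=p\,\cU_q(\LL,\wtB)$ by the $p$-saturation in Corollary~\ref{cor:Uq}, and then iterate. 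Third, one could establish in full generality a bar-invariant $R$-basis of $\cA_q(\LL,\wtB)$ (a quantum greedy or triangular basis) specializing to a $\K$-basis of $\cA(\wtB)$, which yields the conjecture at once.

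In each route the obstacle is the same, and I expect it to be the crux: the $p$-adic iteration in the second route terminates only when the ambient $R$-modules are finitely generated, the finer grading in the first route exists only under finiteness assumptions comparable to~(ii), and the bases of the third route are presently known only under extra hypotheses. The real content of the conjecture is therefore a \emph{finiteness statement} about the $R$-module $\cU_q(\LL,\wtB)/\cA_q(\LL,\wtB)$ over $R=\K[q^{\pm1/2}]$, and removing~(ii) will most likely require genuinely new structural input on (upper) quantum cluster algebras over this ground ring rather than a mere sharpening of the present argument.
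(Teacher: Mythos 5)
The statement you were asked to prove is not proved in the paper at all: it is stated as an open conjecture, precisely because the authors' argument for Theorem~\ref{thm:main} genuinely uses hypothesis~(ii) and they do not know how to dispense with it. Your proposal, candidly, does not prove it either: you reduce the conjecture to the assertion that $\cU_q(\LL,\wtB)/\cA_q(\LL,\wtB)$ has no $p$-torsion (equivalently, that $\cA_q(\LL,\wtB)$ is $p$-saturated in $\cU_q(\LL,\wtB)$) and then survey three strategies, each of which you yourself acknowledge is blocked. The reduction is correct and faithful to the paper's own mechanism: hypothesis~(i) alone yields surjectivity of $\K_1\otimes_R i'$ via Lemma~\ref{lem:surjhom} and Corollary~\ref{cor:Uq}, hence $M=pM$ for $M=\cU_q(\LL,\wtB)/\cA_q(\LL,\wtB)$, and hypothesis~(ii) enters only to split $M$ into finitely generated graded pieces $M_g$, for which $M_g=pM_g$ does force $\Tor_1^R(\K_1,M_g)=0$. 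So as an analysis your text is accurate and matches the structure of the paper's proof of Theorem~\ref{thm:main}; as a proof of the conjecture it has the unavoidable gap that no new idea is supplied to control the non-finitely-generated module $M$ --- which is exactly where the paper itself stops.

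One concrete error should be fixed: your claimed counterexample $M=R_{(p)}/R$ to the implication ``$M=pM\Rightarrow\Tor_1^R(\K_1,M)=0$'' does not work. With the standard meaning of $R_{(p)}$ (localization at the prime ideal $(p)$), the module $R_{(p)}/R$ is indeed $p$-divisible, but it has no $p$-torsion: if $p\cdot(a/b)\in R$ with $p\nmid b$, then $b$ divides $pa$ and $\gcd(b,p)=1$, so $b$ divides $a$, i.e. $a/b\in R$. The intended example is $R[p^{-1}]/R$ (or $\Quot(R)/R$), where the class of $p^{-1}$ is a nonzero element killed by $p$ while the module is still $p$-divisible. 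This slip does not affect your main point about why finite generation is essential, but the example as written is false.
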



\section{Examples}


Let $\g = \g(C)$ be a symmetric Kac-Moody Lie algebra associated
with a symmetric generalized Cartan matrix $C$.
For an  element $w$ in the Weyl group of $\g$, let $\n(w)$
be the associated nilpotent Lie algebra, 
and let $N(w)$ be the associated unipotent group,
compare for example \cite[Section~4.3]{GLS1}.
The coordinate ring $\C[N(w)]$ is naturally isomorphic
to a cluster algebra $\cA(w)$, whose initial exchange matrix $\wtB_\bi$
is defined via some reduced expression $\bi = (i_1,\ldots,i_r)$ 
of $w$. 

To the same data, one can associate a $2$-Calabi-Yau Frobenius category
$\cC_w$, which is by definition a subcategory of the category of
finite-dimensional nilpotent modules over a preprojective algebra $\LL$
associated with $C$.
For each $\LL$-module $X \in \cC_w$ there is an evaluation function
$\delta_X$, see \cite[Section~2.2]{GLS1}.

In \cite{GLS1} we proved that $\cA(w)$ is equal to its upper
cluster algebra and that
$\cA(w)$ is isomorphic to a polynomial ring $\C[\delta_{M_1},\ldots,\delta_{M_r}]$, where the $\delta_{M_k}$ are evaluation functions 
arising from a $\LL$-module $M_\bi = M_1 \oplus \cdots \oplus M_r$
in $\cC_w$ associated with $\bi$.
This follows from \cite[Theorem~3.1]{GLS1} and
\cite[Theorem~3.2]{GLS1}.
The map $M_k \mapsto \dim(M_k)$ turns $\cA(w)$ into a $\Z$-graded 
cluster algebra with finite-dimensional homogeneous components.
Here we also used that the exchange relations for $\cA(w)$ arise
from short exact sequences of $\LL$-modules in $\cC_w$, compare
\cite[Sections~2.7 and 2.8]{GLS1}.
This turns the exchange relations into homogeneous relations.

Thus $\cA(w)$ satisfies all assumptions of Theorem~\ref{thm:mainresult}.
We conclude that the specialized quantum cluster algebra
$\C_1 \otimes_R \cA_q(w)$ is isomorphic to $\cA(w)$.
This fixes a gap in the proof of \cite[Proposition~12.2]{GLS2}, which is
essential for the proof of the main result of \cite{GLS2}.


Goodearl and Yakimov \cite{GY}
construct a large class of quantum cluster algebras, called
\emph{quantum nilpotent algebras},
generalizing the quantum cluster algebras $\cA_q(w)$ mentioned above, at least if
we are in the Dynkin case.
They show that these quantum cluster algebras are equal to their
upper quantum cluster algebras.
But note that Goodearl and Yakimov \cite{GY} work over
a field and not (as in our case) over $R = \K[q^{\pm 1/2}]$.
An upcoming manuscript of the same authors will contain an integral version of their results.


\bigskip
{\parindent0cm \bf Acknowledgements.}\,
We thank Alastair King, Travis Mandel, Dylan Rupel and Milen Yakimov
for helpful discussions.
The first named author acknowledges partial support from 
CoNaCyT grant no.~239255 and thanks the Max-Planck Institute for
Mathematics in Bonn for one year of hospitality in 2017/18.
The third author thanks the SFB/TR 45 for financial support.


\end{document}